\numberwithin{equation}{section}
\theoremstyle{plain}
\newtheorem{theorem}{Theorem}[section]
\newtheorem{corollary}[theorem]{Corollary}
\newtheorem{proposition}[theorem]{Proposition}
\newtheorem{lemma}[theorem]{Lemma}
\theoremstyle{remark}
\newtheorem{remark}[theorem]{Remark}
\newtheorem{example}[theorem]{Example}
\theoremstyle{definition}
\newcommand{\BB}{\mathcal{B}}
\newcommand{\CC}{\mathcal{C}}
\newcommand{\LL}{\mathcal{L}}
\newcommand{\QQ}{\mathcal{Q}}
\newcommand{\R}{\mathbb{R}}
\newcommand{\Q}{\mathbb{Q}}
\newcommand{\N}{\mathbb{N}}
\newcommand{\iii}{\mathtt{i}}
\newcommand{\jjj}{\mathtt{j}}
\newcommand{\kkk}{\mathtt{k}}
\newcommand{\eps}{\varepsilon}
\DeclareMathOperator{\dimloc}{dim_{loc}}
\DeclareMathOperator{\udimloc}{\overline{dim}_{loc}}
\DeclareMathOperator{\ldimloc}{\underline{dim}_{loc}}
\DeclareMathOperator{\uld}{\overline{D}_{loc}}
\DeclareMathOperator{\lld}{\underline{D}_{loc}}
\DeclareMathOperator{\udim}{\overline{dim}}
\DeclareMathOperator{\ldim}{\underline{dim}}
\DeclareMathOperator{\dimh}{dim_H}
\DeclareMathOperator{\dimp}{dim_p}
\DeclareMathOperator{\dist}{dist}
\DeclareMathOperator{\diam}{diam}
\DeclareMathOperator{\spt}{spt}
\begin{document}

\title[Local multifractal analysis]{Local multifractal analysis in metric spaces}

\author{Antti K\"aenm\"aki}
\author{Tapio Rajala}
\address{Department of Mathematics and Statistics \\
         P.O. Box 35 (MaD) \\
         FI-40014 University of Jyv\"askyl\"a \\
         Finland}
\email{antti.kaenmaki@jyu.fi}
\email{tapio.m.rajala@jyu.fi}

\author{Ville Suomala}
\address{Department of Mathematical sciences\\
P.O Box 3000\\
FI-90014 University of Oulu\\
Finland }
\email{ville.suomala@oulu.fi}

\thanks{The authors acknowledge the support of the Academy of Finland, projects \#114821, \#126976, \#137528 and \#211229}
\subjclass[2000]{Primary 28A80; Secondary 28D20, 54E50}
\keywords{local $L^q$-spectrum, local multifractal formalism}
\date{\today}

\begin{abstract}
We study the local dimensions and local multifractal properties of measures on doubling metric spaces. Our aim is twofold. On one hand, we show that there are plenty of multifractal type measures in all metric spaces which satisfy only mild regularity conditions. On the other hand, we consider a local spectrum that can be used to gain finer information on the local behaviour of measures than its global counterpart.
\end{abstract}

\maketitle

\section{Introduction}
In multifractal analysis, the interest is in the behaviour of the local dimension map
\[x\mapsto\dimloc(\mu,x)=\lim_{r \downarrow 0} \log\mu(B(x,r))/\log r,\]
for some, often dynamically defined, fractal type measures $\mu$. From the mathematical point of view, the ultimate goal is to understand the size of the level sets
\[E_\alpha=\{x\,:\,\dimloc(\mu,x)=\alpha\}.\]
It is common to say that ``$\mu$ satisfies the
multifractal formalism'' if for all $\alpha\ge0$ the Hausdorff and packing dimensions of $E_\alpha$ are given by the Legendre transform of the $L^q$-spectrum $\tau_q$, that is,
\begin{equation}\label{eq:formalsim}
\dimh(E_\alpha)=\dimp(E_\alpha)=\inf_{q\in\R}\{q\alpha-\tau_q(\mu)\}.
\end{equation}
See Section \ref{sec:not} below for the precise definitions.

Since its origins in physics literature in the 80's (e.g.\ \cite{HentschelProcaccia1983, FrischParisi1985}), the multifractal analysis has gained a lot of interest. For many relevant works related to multifractal formalism, see e.g.\ references in \cite{FengLau2009}. However, it seems that most of the studies take place in Euclidean spaces, or in spaces having a Euclidean type manifold structure.

In this paper, our goal is to study the local dimensions of measures in doubling metric spaces. Perhaps the most classical situation in which the multifractal
formalism is known to hold is the case of self-similar measures
in Euclidean spaces under the strong separation condition; see
e.g.\ \cite{CawleyMauldin1992, Falconer1997}. Our main results can be viewed as a
generalisation of this result into metric spaces, but our method using local versions of the $L^q$-spectrum and dimensions is useful also in the Euclidean setting.

In a general doubling metric space, there are usually no nontrivial self-similar maps, but often there is still a large class of Moran constructions sharing many of the geometric properties of self-similar iterated function systems. We will consider measures on the limit sets of these Moran constructions and investigate the behaviour and multifractality of $\dimloc(\mu,x)$ for these measures.
To determine $\dimloc(\mu,x)$, we consider the local $L^q$-spectrum of $\mu$.
As for the classical (global) spectrum, the definition involves sums of the form $\sum_{B\in\BB}\mu(B)^q$ over packings or partitions of the space $X$. However, in order to make the notion local, only those $B\in\BB$ are taken into account which are ``sufficiently close to $x$''. It turns out that in many cases, the local spectrum gives more precise information on $\dimloc(\mu,x)$ than its global counterpart. The local spectrum was introduced in \cite{KaenmakiRajalaSuomala2012} as a tool to study local homogeneity properties of measures. Although the definition seems very natural, we were not able to track a definition of a local spectrum for measures in the existing literature. In \cite{Barraletal2010}, a local spectrum for functions is defined in order to study their H\"older regularity. After the completion of our work, the paper \cite{Barraletal2012} was made public. The paper deals with local multifractal analysis in Euclidean spaces for functions, measures, and distrib!
 utions.

The paper is organised as follows. In Section \ref{sec:not}, we set up some notation and define the necessary concepts. Further, in Section \ref{sec:par} we consider partitions of the space $X$, and show how the various dimensions and dimension spectra can be calculated using these partitions. We also relate the $L^q$-spectra and dimensions to the local entropy dimensions defined in \cite{KaenmakiRajalaSuomala2012}. Our main results are presented in Section \ref{sec:results}. We first give a series of conditions for Moran constructions in doubling metric spaces and measures defined on their limit sets. Then we show how the local $L^q$-spectrum can be used to calculate the local dimensions of these measures, and finally study their multifractal properties.

\section{Notation and preliminaries}\label{sec:not}

In this paper, we always assume the metric space $(X,d)$ to be \emph{doubling},
meaning that there is a constant $N=N(X)\in\N$, called the \emph{doubling constant} of $X$,
such that any closed ball $B(x,r) = \{ y\in X : d(x,y) \le r \}$ with centre $x \in X$ and radius $r>0$ can be
covered by $N$ balls of radius $r/2$.

For $M>0$ and a ball $B=B(x,r)$, we will use the abbreviation $MB=B(x,Mr)$. For this to make sense, we always assume that the radius and centre of the ball $B$ have been fixed, even if these are not explicitly mentioned.

We call any countable collection $\BB$ of pairwise disjoint closed balls a \emph{packing}. It is called a \emph{packing of $A$} for a subset $A \subset X$ if the centers of the balls of $\BB$ are in the set $A$, and it is a $\delta$-packing (for $\delta > 0$) if all the balls in $\BB$ have radius $\delta$. A $\delta$-packing $\BB$
of $A$ is termed \emph{maximal} if for every $x \in A$ there is $B \in \BB$ so that $B(x,\delta) \cap B \ne \emptyset$. Note that if $\BB$ is a maximal $\delta$-packing of $A$, then $2\BB = \{ 2B : B \in \BB \}$ covers $A$.

The following lemma will be frequently used in this paper.

\begin{lemma} \label{thm:covering_thm}
  For a metric space $X$, the following statements are equivalent:
\begin{enumerate}
  \item $X$ is doubling. \label{covering1}
  \item There are $s>0$ and $c>0$ such that for all $R>r>0$ any ball of radius $R$ can be covered by $c(r/R)^{-s}$ balls of radius $r$. \label{covering2}
  \item There are $s>0$ and $c>0$ such that if $R>r>0$ and $\BB$ is an $r$-packing of a closed ball of radius $R$, then the cardinality of $\BB$ is at most $c(r/R)^{-s}$. \label{covering3}
  \item For every $0<\lambda<1$ there is a constant $M=M(X,\lambda) \in
  \N$, satisfying the following: If $\BB$ is a collection of closed
  balls of radius $\delta > 0$ so that $\lambda\BB$ is pairwise
  disjoint,
  then there are $\delta$-packings $\{ \BB_1,\ldots,\BB_M \}$ so that
  $\BB=\bigcup_{i=1}^M\BB_i$. \label{covering4}
  \item There is $M=M(X) \in \N$ such that if $A\subset X$ and $\delta>0$, then there are $\delta$-packings of $A$, $\BB_1,\ldots,\BB_M$ whose union covers $A$. \label{covering5}
\end{enumerate}
\end{lemma}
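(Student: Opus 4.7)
The plan is to run the cycle of implications $(1) \Rightarrow (2) \Rightarrow (3) \Rightarrow (4) \Rightarrow (5) \Rightarrow (1)$, exploiting iteration of the doubling condition, a packing--covering duality, greedy graph colouring, and a maximal-packing extraction.

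For $(1) \Rightarrow (2)$, I would iterate the doubling property: picking $k \in \N$ with $2^{-k}R \leq r$, any ball of radius $R$ is covered by $N^k \leq N(R/r)^{\log_2 N}$ balls of radius $r$, giving (2) with $s = \log_2 N$ and $c = N$. For $(2) \Rightarrow (3)$, I would cover the ambient ball of radius $R$ by radius-$r/2$ balls using (2). If two centres $x, y$ of an $r$-packing $\BB$ lay in the same small ball, then $d(x,y) \leq r$, forcing $x \in B(x,r) \cap B(y,r)$ and contradicting disjointness; hence each small ball contains at most one centre, which bounds $|\BB|$.

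For $(3) \Rightarrow (4)$, fix $\lambda \in (0,1)$. For any $B_0 \in \BB$, the centres of balls $B \in \BB$ meeting $B_0$ lie in $3B_0$, and their $\lambda$-dilates form a $\lambda\delta$-packing of $3B_0$; by (3), at most $D = c(3/\lambda)^s$ balls meet $B_0$. The intersection graph on $\BB$ then has maximum degree at most $D$, and a greedy colouring with $M := D+1$ colours decomposes $\BB$ into $M$ packings. For $(4) \Rightarrow (5)$, use Zorn's lemma to choose a maximal $S \subset A$ with $\{B(y, \delta/2) : y \in S\}$ pairwise disjoint; the collection $\BB := \{B(y, \delta) : y \in S\}$ satisfies the hypothesis of (4) with $\lambda = 1/2$, and (4) decomposes it into $M$ packings $\BB_1, \ldots, \BB_M$, while the maximality of $S$ ensures that $\bigcup_{B \in \BB} B$ covers $A$.

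The delicate final step is $(5) \Rightarrow (1)$. Applying (5) to $A = B(x, r)$ with $\delta = r/4$ yields $M$ packings whose union covers $B(x, r)$; inflating each ball from radius $r/4$ to $r/2$ still produces a cover by balls of radius $r/2$ centred in $B(x, r)$. The main obstacle is to bound the total number of balls by a constant $N = N(X)$: within each packing the centres form a subset of $B(x, r)$ whose pairwise distances exceed $r/4$, and one must show that such a separated subset has cardinality bounded in terms of $M$ alone. A further application of (5) at a still finer scale to this set of centres forces the desired finiteness, and the resulting cover of $B(x, r)$ by at most $N(X)$ balls of radius $r/2$ yields doubling, closing the cycle.
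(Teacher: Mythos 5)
The paper states this lemma without proof (it is invoked as a known covering result), so there is no in-paper argument to compare against; I am judging your proposal on its own merits. Your chain $(1)\Rightarrow(2)\Rightarrow(3)\Rightarrow(4)\Rightarrow(5)$ is correct: the iteration of the doubling constant, the packing--covering duality via half-radius balls, the degree bound $D=c(3/\lambda)^s$ followed by greedy colouring (the collection is countable, since by (3) each ball of radius $n$ contains only finitely many centres of the disjoint $\lambda$-dilates and $X$ is a countable union of such balls), and the maximal-separated-set construction for $(4)\Rightarrow(5)$ are all sound.

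The final step $(5)\Rightarrow(1)$ is not merely delicate: as stated, that implication is false, so no refinement of your sketch can close the cycle there. Take $X=\N$ with the discrete metric $d(m,n)=1$ for $m\ne n$. This space is not doubling, since $B(1,1)=\N$ cannot be covered by finitely many balls of radius $1/2$ (these are singletons). Yet (5) holds with $M=1$: for $\delta\ge 1$ the single ball $B(a,\delta)=X$ is a $\delta$-packing of $A$ covering $A$, and for $\delta<1$ the collection $\{B(a,\delta):a\in A\}=\{\{a\}:a\in A\}$ is a countable, pairwise disjoint $\delta$-packing of $A$ covering $A$. This example also defeats your proposed repair: the centres of that packing form a $1$-separated, infinite subset of $B(1,1)$, and applying (5) to this centre set at a finer scale simply returns the same singleton packing and forces no finiteness. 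The correct way to close the cycle is $(4)\Rightarrow(1)$, which does work: given $B(x,R)$, take a maximal $S\subset B(x,R)$ with $\{B(y,R/4):y\in S\}$ pairwise disjoint, so that $\{B(y,R/2):y\in S\}$ covers $B(x,R)$ by maximality; then apply (4) with $\lambda=1/4$ to $\BB=\{B(y,R):y\in S\}$. Since every ball of $\BB$ contains $x$, no two of them are disjoint, so each of the $M(1/4)$ resulting packings contains at most one ball, whence $\#S\le M(1/4)$ and $X$ is doubling. With this modification, $(1)$--$(4)$ are equivalent and each implies (5); condition (5) should be regarded as a consequence rather than an equivalent characterisation (it is, in any case, only used in that direction in the paper).
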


The \emph{upper and lower local dimensions of a measure $\mu$ at $x$} are defined by
\begin{align*}
  \udimloc(\mu,x) &= \limsup_{r \downarrow 0} \log\mu(B(x,r))/\log r, \\
  \ldimloc(\mu,x) &= \liminf_{r \downarrow 0} \log\mu(B(x,r))/\log r,
\end{align*}
respectively. If the upper and lower dimensions agree, we call their
mutual value the \emph{local dimension of the measure $\mu$ at $x$}
and write $\dimloc(\mu,x)$ for this common value.
In this article, a \emph{measure} exclusively refers to a nontrivial Borel regular (outer) measure defined on all subsets of $X$ so that bounded sets have finite measure.

For estimating the local dimensions we will use the local $L^q$-dimensions defined in \cite{KaenmakiRajalaSuomala2012}.
Although the local definitions are obtained via their global counterparts in small balls, their behaviour can be quite different; see \cite[Examples 5.1--5.2]{KaenmakiRajalaSuomala2012}.

Let $\mu$ be a measure on $X$, $A\subset X$ a bounded set and $q\in\R$.
The \emph{(global) $L^q$-spectrum of $\mu$ on $A$} is defined by
\begin{equation*}
  \tau_q(\mu,A)=\liminf_{\delta \downarrow 0}
  \frac{\log S_{q}(\mu,A,\delta)}{\log\delta},
\end{equation*}
where
\begin{equation}\label{Sdef}
  S_{q}(\mu,A,\delta) = \sup\Bigl\{ \sum_{B \in \BB} \mu(B)^q :
  \BB \text{ is a $\delta$-packing of } A\cap\spt(\mu) \Bigr\}
\end{equation}
is the \emph{$L^q$-moment sum of $\mu$ on $A$ at the scale $\delta$}. Note that if $q\ge 0$, the definition of $\tau_q(\mu,A)$ does not change if $A\cap\spt(\mu)$ is replaced by $A$ in the right-hand side of \eqref{Sdef}.
If $q \ne 1$, then we define the \emph{(global) $L^q$-dimension of $\mu$ on $A$} by setting
\begin{equation*}
  \dim_q(\mu,A) = \tau_q(\mu,A)/(q-1).
\end{equation*}
We also denote $\tau_q(\mu) = \tau_q(\mu,X)$ and $\dim_q(\mu) = \dim_q(\mu,X)$ provided that $X$ is bounded.

In the case $q = 1$ the above definition makes no sense. Thus we define for every $A\subset X$ with $\mu(A)>0$ the \emph{(global) upper and lower entropy dimensions of $\mu$ on $A$} as
\begin{equation*}
\begin{split}
  \udim_1(\mu,A) &= \limsup_{\delta \downarrow 0} \fint_{A} \frac{\log\mu(B(y,\delta))}{\log\delta}\,d\mu(y), \\
  \ldim_1(\mu,A) &= \liminf_{\delta \downarrow 0} \fint_{A} \frac{\log\mu(B(y,\delta))}{\log\delta}\,d\mu(y),
\end{split}
\end{equation*}
respectively. If they agree, then their common value is denoted by $\dim_1(\mu,A)$.
Here and hereafter, for $A\subset X$ and a $\mu$-measurable $f\colon
X\rightarrow\overline{\R}$, we use the notation $\fint_A f(y)\,d\mu(y) =
\mu(A)^{-1} \int_A f(y)\,d\mu(y)$ whenever the integral is well defined.

From the above global definitions we then derive their local versions.
The \emph{local $L^q$-spectrum of $\mu$ at $x\in\spt(\mu)$} is defined as
\begin{equation*}
  \tau_q(\mu,x) = \lim_{r \downarrow 0} \tau_q(\mu, B(x,r))
\end{equation*}
and the \emph{local $L^q$-dimension of $\mu$ at $x$} as
\begin{equation*}
  \dim_q(\mu,x) = \lim_{r \downarrow 0} \dim_q(\mu,B(x,r)) = \tau_q(\mu,x)/(q-1).
\end{equation*}
Correspondingly, the \emph{local upper and lower entropy dimensions at $x\in\spt(\mu)$} are defined as
\begin{equation*}
\begin{split}
  \udim_1(\mu,x) &= \limsup_{r \downarrow 0} \udim_1(\mu,B(x,r)),\\
  \ldim_1(\mu,x) &= \liminf_{r \downarrow 0} \udim_1(\mu,B(x,r)).
\end{split}
\end{equation*}
For the basic properties of $\dim_q$, we refer to \cite{KaenmakiRajalaSuomala2012}.

The following theorem lists the main relationships between the different local dimensions. Recall that a measure $\mu$ has the density point property, if
\begin{equation*}
  \lim_{r \downarrow 0} \frac{\mu(A \cap B(x,r))}{\mu(B(x,r))} = 1
\end{equation*}
for $\mu$-almost all $x \in A$ whenever $A \subset X$ is $\mu$-measurable.
Note that although in Euclidean spaces the density point property is satisfied for all measures, this is not necessarily the case in doubling metric spaces; see \cite[Example 5.6]{KaenmakiRajalaSuomala2012}.

\begin{theorem} \label{thm:pointwise_dimq}
  If $\mu$ is a measure on a doubling metric space $X$, then
  \begin{equation} \label{eq:local_dimq}
    \lim_{q \downarrow 1} \dim_q(\mu,x) \le \ldimloc(\mu,x) \le \udimloc(\mu,x) \le \lim_{q \uparrow 1} \dim_q(\mu,x)
  \end{equation}
  for $\mu$-almost all $x \in X$ and
  \begin{equation} \label{eq:dim1_dimq}
    \lim_{q \downarrow 1} \dim_q(\mu,x) \le \ldim_1(\mu,x) \le \udim_1(\mu,x) \le \lim_{q \uparrow 1} \dim_q(\mu,x)
  \end{equation}
  for every $x \in \spt(\mu)$.

  Furthermore, if the measure $\mu$ has the density point property, then
  \begin{equation} \label{eq:local_dim1}
    \ldimloc(\mu,x) \le \ldim_1(\mu,x) \le \udim_1(\mu,x) \le \udimloc(\mu,x)
  \end{equation}
  for $\mu$-almost all $x \in X$.
\end{theorem}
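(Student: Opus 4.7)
The theorem has three assertions; I would prove them in the order (2), (3), (1). Two preliminary facts are required. First, $q\mapsto\dim_q(\mu,A)$ is monotone (via H\"older, which shows concavity of $\tau_q$), so the one-sided limits at $q=1$ exist. Second, the \emph{packing--integral comparison}
\[
\int_{B(x,r)}\mu(B(y,\delta))^{q-1}\,d\mu(y)\asymp S_q(\mu,B(x,r+C\delta),C\delta),
\]
where $C$ and the implicit constants depend only on the doubling constant and $q$, which is proved by picking a maximal $\delta$-packing of $B(x,r)\cap\spt(\mu)$ and using Lemma \ref{thm:covering_thm}(5) to split the integral into a bounded number of packing sums.

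For (2), I apply Jensen's inequality to the concave function $\log$ against the probability measure $d\mu/\mu(B(x,r))$ restricted to $B(x,r)$:
\[
(q-1)\fint_{B(x,r)}\log\mu(B(y,\delta))\,d\mu(y)\le\log\fint_{B(x,r)}\mu(B(y,\delta))^{q-1}\,d\mu(y).
\]
Dividing by $(q-1)\log\delta$, carefully tracking the sign according to whether $q>1$ or $q<1$, and then taking $\liminf$ or $\limsup$ in $\delta$ together with the packing--integral comparison, yields $\dim_q(\mu,B(x,r))\le\udim_1(\mu,B(x,r))$ for $q>1$ and the reverse inequality for $q<1$. Passing to the limits $r\downarrow 0$ and $q\to 1^\pm$, and using the definitions of $\ldim_1(\mu,x)$ and $\udim_1(\mu,x)$ in terms of $\udim_1(\mu,B(x,r))$, gives (2).

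For (3), assume the density point property. To prove $\udim_1(\mu,x)\le\udimloc(\mu,x)$ for $\mu$-a.e.\ $x$, fix rational $\alpha$ and set $A_\alpha=\{y\in\spt(\mu):\udimloc(\mu,y)<\alpha\}$. Egorov's theorem gives a subset $A'_\alpha\subset A_\alpha$ with $\mu(A_\alpha\setminus A'_\alpha)$ arbitrarily small and $\log\mu(B(y,\delta))/\log\delta\le\alpha$ uniformly for $y\in A'_\alpha$ and $\delta\le\delta_0$. The density point property then forces $\mu(A'_\alpha\cap B(x,r))/\mu(B(x,r))\to 1$ as $r\downarrow 0$ for $\mu$-a.e.\ $x\in A_\alpha$. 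Splitting the defining integral of $\udim_1(\mu,B(x,r))$ over $A'_\alpha\cap B(x,r)$ and its complement: the first piece is bounded by $\alpha\cdot(1+o(1))$; the second has negligible $\mu$-mass, and its integrand is uniformly bounded by the doubling-exponent constant from Lemma \ref{thm:covering_thm}(2), which gives $\mu(B(y,\delta))\ge c(\delta/r)^s\mu(B(x,r))$ uniformly for $y\in B(x,r)\cap\spt(\mu)$. Taking $r\downarrow 0$ and then refining countably over $\alpha$ yields the upper bound; the lower inequality $\ldimloc(\mu,x)\le\ldim_1(\mu,x)$ is handled symmetrically.

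For (1), the density point property is unavailable, so we argue directly. Fix $q>1$ and rational $\alpha$, and let $E_\alpha=\{x\in\spt(\mu):\ldimloc(\mu,x)<\alpha\}$. For each $x\in E_\alpha$ there are $\delta_k\downarrow 0$ with $\mu(B(x,\delta_k))\ge\delta_k^\alpha$. Decomposing $E_\alpha$ by a pigeonhole argument into countably many pieces on which such good scales lie in fixed dyadic ranges, one extracts maximal packings of high-mass balls whose $L^q$-moment sums give, via the packing--integral comparison, an upper bound on $\tau_q(\mu,B(x,r))$. The main obstacle is precisely the $x$-dependence of the good scales $(\delta_k)$; a Borel--Cantelli argument on each dyadic piece yields $\tau_q(\mu,B(x,r))\le(q-1)\alpha$ for $\mu$-a.e.\ $x\in E_\alpha$, whence $\dim_q(\mu,B(x,r))\le\alpha$. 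Letting $\alpha\downarrow\ldimloc(\mu,x)$ and $q\downarrow 1$ gives the first inequality of (1); the upper inequality involving $\udimloc$ and $q\uparrow 1$ is symmetric.
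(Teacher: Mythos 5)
There is a genuine error in your argument for \eqref{eq:local_dim1}. To control the integral over the exceptional set $B(x,r)\setminus A'_\alpha$ you claim that Lemma \ref{thm:covering_thm}\eqref{covering2} gives $\mu(B(y,\delta))\ge c(\delta/r)^s\mu(B(x,r))$ uniformly for $y\in B(x,r)\cap\spt(\mu)$. That inequality is the statement that $\mu$ is a doubling \emph{measure}; the doubling property of the \emph{space} gives nothing of the sort, and for a general measure the integrand $\log\mu(B(y,\delta))/\log\delta$ is not uniformly bounded on $\spt(\mu)$ at a fixed scale. For instance, for $\mu=\sum_{k}2^{-k^2}\delta_{2^{-k}}$ on $[0,1]$, at $y=2^{-k}$ and $\delta=2^{-k-2}$ the integrand equals $k^2/(k+2)\to\infty$, so no fixed exponent $s$ can work. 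Hence the bad set cannot be dismissed by a pointwise bound times its small measure. The correct repair is an \emph{integrated} estimate of exactly the type of Lemma \ref{lma:smallentropy}: covering the bad set $A$ by a $\delta$-scale partition and using $t\log t\ge -1/e$ together with Jensen, one gets $\int_A\log\mu(B(y,\delta))\,d\mu(y)\ge -\tfrac1e-\mu(A)\bigl(\log c+s\log(4\diam(A)/\delta)\bigr)$, so that the bad set contributes at most $o(1)+O(s)\,\mu(A)/\mu(B(x,r))$ to $\fint_{B(x,r)}\log\mu(B(y,\delta))/\log\delta\,d\mu(y)$, which the density point property then makes negligible as $r\downarrow0$. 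With this substitution your Egorov-plus-density-point scheme for \eqref{eq:local_dim1} goes through.

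For the rest: your proof of \eqref{eq:dim1_dimq} via Jensen's inequality and the comparison of $\int_{B(x,r)}\mu(B(y,\delta))^{q-1}\,d\mu(y)$ with packing sums is a correct alternative to the paper's route, which instead derives \eqref{eq:dim1_dimq} from Proposition \ref{prop:dim1} using partitions and the convexity of $h_n(q)=\log\sum_{Q\in\QQ_n(A)}\mu(Q)^q$; note that only the one-sided bound $\int\mu(B(y,\delta))^{q-1}\,d\mu\le C\,S_q(\mu,\cdot,C\delta)$ is actually needed (and is the direction that holds for all measures without any doubling of $\mu$), so you should not assert a two-sided $\asymp$. Be aware also that the paper does not reprove \eqref{eq:local_dimq} and \eqref{eq:local_dim1} at all but cites them from \cite{KaenmakiRajalaSuomala2012}. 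Your sketch for \eqref{eq:local_dimq} is on the right track but is only a sketch: the essential point, which you should make explicit, is that after pigeonholing the good scales into dyadic ranges $B_j$, the fact that every point of $E_\alpha$ lies in infinitely many $B_j$ gives $\sum_j\mu(B_j)=\infty$, hence $\mu(B_j)\ge j^{-2}$ for infinitely many $j$, which produces a positive-measure set of centres sharing a common good scale and yields $\tau_q(\mu,B(x,r))\le(q-1)\alpha$ rather than the useless single-ball bound $q\alpha$.
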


The claims \eqref{eq:local_dimq} and \eqref{eq:local_dim1} are proved in \cite[Theorem 3.1 and Theorem 3.11]{KaenmakiRajalaSuomala2012} and \eqref{eq:dim1_dimq} follows immediately from Proposition \ref{prop:dim1} below. It is worthwhile to notice that the density point property is not needed in the global version of \eqref{eq:local_dim1} whereas in the local case, it is a necessary assumption; see \cite[Remark 3.12 and Examples 5.7--5.8]{KaenmakiRajalaSuomala2012}.

\section{Entropy and $L^q$-dimensions using partitions}\label{sec:par}

In this section, we reformulate the main definitions using partitions of the space $X$ and show that these definitions are consistent with the ordinary definitions presented above.
Concerning the global $L^q$-spectrum on $X$ and global entropy dimensions on $X$, this is of course a known result; see for instance \cite{Cutler1990, Olsen1995, Pesin1997}. We already saw in the inequality \eqref{eq:local_dim1} of Theorem \ref{thm:pointwise_dimq} that local and global definitions do not necessarily have the same basic properties. In Proposition \ref{1proposition} and Example \ref{1example}, we will see that global entropy dimensions on $A$ can be defined via partitions only when $A$ is compact.

The use of the partitions is motivated by the fact that the original definition of the $L^q$-dimension using packings often causes technical problems if $q<0$. Moreover, the measures that we are interested in usually have some additional a priori structure for which the partition definition suits well. For instance, see Lemma \ref{partitionlemma}. We also use the partition definitions to relate the $L^q$- and entropy dimensions in Proposition \ref{prop:dim1}.

Let $1\le \Lambda<\infty$.
A countable partition $\QQ$ of $X$ is called a \emph{$(\delta,\Lambda)$-partition} (for $\delta>0$) if all the sets of $\QQ$ are Borel sets and for each $Q \in \QQ$ there exists a ball $B_Q$ so that $Q \subset \Lambda B_Q$ and the collection $\{ B_Q : Q \in \QQ \}$ is a $\delta$-packing. The choice of $\Lambda$ is usually not important, and thus we simply talk about $\delta$-partitions and assume that $\Lambda$ has been silently fixed. Usually we consider $\delta_n$-partitions for a sequence of $\delta_n$ and in this case we assume that $\Lambda$ is the same for all $\delta_n$.

Let $(\delta_n)_{n \in \N}$ be a decreasing sequence of positive real numbers so that there is $0<c<1$ for which
\begin{equation}\label{eq:c}
  \delta_n<c^n
\end{equation}
for all $n$ and
\begin{equation}\label{log1}
\log\delta_n/\log\delta_{n+1}\longrightarrow 1
\end{equation}
as $n\rightarrow\infty$. For each $n\in\N$
we fix a $\delta_n$-partition $\QQ_n$.
If $x \in X$, then we
denote the unique element of $\QQ_n$ containing $x$ by $Q_n(x)$.
Furthermore, if $A \subset X$, then we set $\QQ_n(A) =\{ Q \in \QQ_n : A \cap
Q \ne \emptyset \}$ for all $n \in \N$.

Perhaps the most classical example of a $\delta$-partition is the dyadic cubes of the Euclidean space. We remark that in doubling metric spaces, it is possible to define similar kind of nested partitions sharing most of the good properties of dyadic cubes; see \cite{KaenmakiRajalaSuomala2012a} and references therein. But often in applications, the nested structure is inconvenient to work with. Since the $\delta_n$-partitions do not have to be nested, they are slightly more flexible than such generalised nested cubes.

Throughout this section, we assume that for each $n \in \N$ we have a fixed $\delta_n$-partition $\QQ_n$, where $(\delta_n)_{n \in \N}$ is a decreasing sequence satisfying \eqref{eq:c} and \eqref{log1}.

\subsection{Local dimensions via partitions}
We include a proof of the following folklore result, Proposition \ref{thm:localdim},
since we have not been able to track a complete proof in the literature
(see e.g.\ \cite[Lemma
2.3]{Cutler1990} and \cite[Theorem 15.3]{Pesin1997}).

To simplify the notation, we set
\begin{align*}
  \uld(\mu,x) &= \limsup_{n\to\infty} \log\mu(Q_n(x))/\log \delta_n, \\
  \lld(\mu,x) &= \liminf_{n\to\infty} \log\mu(Q_n(x))/\log \delta_n
\end{align*}
for all measures $\mu$ on $X$ and $x \in X$. A
priori, the definitions of $\uld(\mu,x)$ and $\lld(\mu,x)$ depend on
the choice of the partition, but Proposition \ref{thm:localdim} implies
that almost everywhere these quantities equal the local
dimensions and hence, the choice of the partition does not play any
role.

\begin{proposition}\label{thm:localdim}
If $\mu$ is a measure on a doubling metric space $X$, then
\begin{align*} 
  \udimloc(\mu,x) &= \uld(\mu,x), \\
  \ldimloc(\mu,x) &= \lld(\mu,x)
\end{align*}
for $\mu$-almost all $x\in X$.
\end{proposition}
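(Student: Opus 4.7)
The plan is to establish each equality by a two-sided inequality: the estimates $\uld(\mu,x)\ge\udimloc(\mu,x)$ and $\lld(\mu,x)\ge\ldimloc(\mu,x)$ are pointwise and essentially formal, whereas the reverse inequalities hold only $\mu$-almost everywhere and need a Borel--Cantelli argument based on doubling.

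\textbf{Easy direction.} Since $x \in Q_n(x) \subset \Lambda B_{Q_n(x)}$, the centre of $B_{Q_n(x)}$ lies within $\Lambda\delta_n$ of $x$, so $Q_n(x) \subset B(x,2\Lambda\delta_n)$. Taking logarithms of $\mu(Q_n(x)) \le \mu(B(x,2\Lambda\delta_n))$, dividing by $\log\delta_n$, and using the identity $\log(2\Lambda\delta_n)/\log\delta_n \to 1$ together with assumption \eqref{log1} (which also implies that the continuous upper/lower local dimensions are captured by the subsequence $r_n=2\Lambda\delta_n$), one reads off $\uld(\mu,x) \ge \udimloc(\mu,x)$ and $\lld(\mu,x) \ge \ldimloc(\mu,x)$ for every $x$.

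\textbf{Hard direction, via Borel--Cantelli.} Fix $\eps>0$ and set $E_n = \{x : \mu(Q_n(x)) < \delta_n^\eps\mu(B(x,\delta_n))\}$. By Lemma \ref{thm:covering_thm} (applied to the packing $\{B_Q\}_{Q\in\QQ_n}$ at scales $\delta_n$ and $(\Lambda+1)\delta_n$), at most $K = K(X,\Lambda)$ elements of $\QQ_n$ meet any ball $B(x,\delta_n)$. If $x\in E_n$ and $n$ is large enough that $\delta_n^\eps < 1/2$, then one of these elements, say $Q^*$, is distinct from $Q_n(x)$ and carries mass $\mu(Q^*) \ge \mu(B(x,\delta_n))/(2K)$, so that $\mu(Q_n(x)) < 2K\delta_n^\eps\mu(Q^*)$. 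Now charge each bad element $Q$ to some such target $Q^*$. Applying Lemma \ref{thm:covering_thm} a second time shows that the centres of balls $B_Q$ that can be charged to a fixed $Q^*$ lie in $B(c_{Q^*},(2\Lambda+1)\delta_n)$, so at most $K'=K'(X,\Lambda)$ many $Q$ share any given target. Since bounded sets have finite $\mu$-measure, for any bounded $A \subset X$ and its $1$-neighbourhood $A'$ we obtain, for all large $n$,
\[
\mu(E_n \cap A) \;\le\; \sum_{Q\cap E_n\cap A\neq\emptyset}\mu(Q) \;\le\; 2KK'\delta_n^\eps \sum_{Q' \subset A'}\mu(Q') \;\le\; 2KK'\delta_n^\eps\mu(A').
\]
Assumption \eqref{eq:c} gives $\sum_n \delta_n^\eps < \infty$, so $\sum_n\mu(E_n\cap A)<\infty$ and Borel--Cantelli yields $\mu$-a.e.\ $x\in A$ is in only finitely many $E_n$.

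\textbf{Conclusion.} Covering $X$ by countably many bounded sets (doubling implies separable) and intersecting the resulting full-measure sets over $\eps = 1/k$, $k\in\N$, produces a single full-measure set on which $\mu(Q_n(x)) \ge \delta_n^{1/k}\mu(B(x,\delta_n))$ for all large $n$ and every $k$. Dividing $\log\mu(Q_n(x))\ge (1/k)\log\delta_n + \log\mu(B(x,\delta_n))$ by $\log\delta_n<0$ and passing to $\limsup_n$ (resp.\ $\liminf_n$) gives $\uld(\mu,x) \le 1/k + \udimloc(\mu,x)$ and $\lld(\mu,x) \le 1/k + \ldimloc(\mu,x)$; sending $k\to\infty$ closes the chain. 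The main obstacle is the combinatorial charging step, which is the only place a nontrivial use of doubling enters, and where care is needed to extract the constants $K$ and $K'$ from Lemma \ref{thm:covering_thm}.
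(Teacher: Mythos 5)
Your proof is correct and follows essentially the same route as the paper: the trivial inclusion $Q_n(x)\subset B(x,C\Lambda\delta_n)$ together with \eqref{log1} for one direction, and a Borel--Cantelli argument powered by the doubling bound on the number of partition elements meeting a $\delta_n$-ball for the other. The only (harmless) difference is bookkeeping: the paper fixes exponents $t<s$ and bounds $\mu(A_n(t,s))$ by covering it with boundedly many $\delta_n$-packings, whereas you encode the comparison as the single ratio condition $\mu(Q_n(x))<\delta_n^{\eps}\mu(B(x,\delta_n))$ and charge each light cell to a heavy neighbour with bounded multiplicity.
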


\begin{proof}
The inequalities
  $\udimloc(\mu,x) \le \uld(\mu,x)$,
  $\ldimloc(\mu,x) \le \lld(\mu,x)$
are seen to hold for all $x\in X$ by using \eqref{log1} and the fact $Q_n(x)\subset B(x,(\Lambda+1)\delta_n)$ for all $x\in X$ and $n\in\N$.

To prove the estimates in the other direction,
fix a bounded set $A \subset X$, $0<t<s<\infty$ and define
\[A_n(t,s)=\{x\in A : \mu(Q_n(x))<\delta_{n}^{s}\text{ and }\mu(B(x,\delta_n))>\delta_{n}^t\}.\]
Now the set
\[\{x\in A : \udimloc(\mu,x)<\uld(\mu,x)\text{ or }\ldimloc(\mu,x)<\lld(\mu,x)\}\]
is contained in
\[\bigcup_{0<t<s<\infty}\bigcap_{k\in\N}\bigcup_{n=k}^\infty A_n(t,s)\,\]
where the union is over countably many (e.g.\ rational) $t$ and $s$. Thus,
by the Borel-Cantelli Lemma, it suffices to show that $\sum_{n\in\N}\mu(A_n(t,s))<\infty$ for any choice of $t$ and $s$.
To verify this, let $n\in\N$ and consider $x\in A_n(t,s)$. Since only at most $C=C(N,\Lambda)$ of the sets $Q\in\QQ_n$ meet $B(x,\delta_n)$ (cf.\ Lemma \ref{thm:covering_thm}\eqref{covering3}), we have the estimate
$\mu(A_n(t,s) \cap B(x,\delta_n))\le C \delta_n^{s}=C\delta_n^{s-t}\delta_{n}^t\le C\delta_n^{s-t}\mu(B(x,\delta_n))$. Using Lemma \ref{thm:covering_thm}\eqref{covering5}, we may cover $A_n(t,s)$ by a union of at most $M=M(N)$ $\delta_n$-packings of $A_n(t,s)$ and thus
\[\mu(A_n(t,s))\le CM \delta_n^{s-t}\mu(B),\]
where $B$ is a ball centered at $A$ with radius $\diam(A)+1$.
By \eqref{eq:c}, the sum $\sum_{n\in\N}\delta_{n}^{s-t}$ converges and the claim holds for $\mu$-almost all $x\in A$. As this is true for any bounded $A\subset X$, this finishes the proof.
\end{proof}

\subsection{$L^q$-spectrum and entropy dimension via partitions} \label{sec:partitions}
The equivalence of different definitions of $L^q$-spectum has already been considered in the literature,
in particular in \cite{Olsen1995}. Nevertheless, we present here a short proof of Proposition \ref{qproposition} for the convenience
of the reader. After that we study the more subtle case of entropy dimension, where the results, to our knowledge, are new.

The following proposition shows that both the local and global $L^q$-spectrum and $L^q$-dimension can equivalently be defined by using partitions.
Later we will show that this is also the case for the local entropy dimension, see Proposition \ref{1proposition}. For the global entropy dimension the situation is slightly more complicated.

\begin{proposition}\label{qproposition}
  If $\mu$ is a measure on a doubling metric space $X$, $A \subset X$ is bounded with $\mu(A)>0$ and $q\ge0$, then
    \begin{equation*}
      \tau_q(\mu,A) = \liminf_{n\rightarrow\infty}
      \frac{\log\sum_{Q \in \QQ_n(A)} \mu(Q)^q}{\log \delta_n}.
    \end{equation*}
\end{proposition}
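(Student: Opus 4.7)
My plan is to sandwich the partition sum $T_n := \sum_{Q \in \QQ_n(A)} \mu(Q)^q$ between $L^q$-moment sums $S_q(\mu, A, c\delta_n)$ at comparable scales, and then use \eqref{log1} to absorb the bounded constant $c$ upon passing to the $\liminf$.

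The first step is to prove $T_n \leq C_1 \, S_q(\mu, A, 2\Lambda\delta_n)$, which will yield $\tau_q(\mu, A) \leq \liminf_n \log T_n/\log\delta_n$. For each $Q \in \QQ_n(A)$ choose $x_Q \in Q \cap A$; since $x_Q \in Q \subset \Lambda B_Q$, we have $Q \subset B(x_Q, 2\Lambda\delta_n)$ and hence $\mu(Q) \leq \mu(B(x_Q, 2\Lambda\delta_n))$. Lemma \ref{thm:covering_thm}\eqref{covering3} bounds by a constant $C_0 = C_0(X,\Lambda)$ the number of balls in the family $\{B(x_{Q'}, 2\Lambda\delta_n) : Q' \in \QQ_n(A)\}$ that can intersect any given one of them, because the corresponding disjoint radius-$\delta_n$ enclosing balls $B_{Q'}$ must then cluster within a ball of radius $5\Lambda\delta_n$ around $x_Q$. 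A greedy coloring partitions $\QQ_n(A)$ into $\leq C_0+1$ groups, each providing a genuine $2\Lambda\delta_n$-packing of $A$; summing over groups and applying the definition of $S_q$ gives the estimate. Taking logs, dividing by $\log\delta_n < 0$, and invoking $\log(2\Lambda\delta_n)/\log\delta_n \to 1$ from \eqref{log1} yields the first direction.

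For the reverse inequality, I would compare a general $\delta_n$-packing $\BB$ of $A$ with the partition. Each $B \in \BB$ meets at most $C_1 = C_1(X,\Lambda)$ cells of $\QQ_n$ by a doubling count on the enclosing balls, and for $q \geq 0$ we have $\mu(B)^q \leq C\sum_{Q: Q \cap B \neq \emptyset} \mu(Q)^q$ by subadditivity (when $q \in [0,1]$) or Jensen's inequality (when $q \geq 1$). Summing over $B$ and noting that each cell is met by at most $C_1$ balls from $\BB$ yields
\[ \sum_{B \in \BB} \mu(B)^q \leq C_2 \sum_{Q \in \tilde\QQ_n} \mu(Q)^q, \]
where $\tilde\QQ_n := \{Q \in \QQ_n : \dist(Q, A) \leq \delta_n\}$ consists of cells within distance $\delta_n$ of $A$.

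The main obstacle is to bridge $\sum_{Q \in \tilde\QQ_n}\mu(Q)^q$ and $T_n$, since $\tilde\QQ_n$ may strictly contain $\QQ_n(A)$ when cells bordering $A$ fail to actually meet it. I would resolve this by applying the construction of the first step to $\tilde\QQ_n$ in place of $\QQ_n(A)$: pick $z_Q \in A$ with $\dist(z_Q, Q) \leq \delta_n$ for each $Q \in \tilde\QQ_n$, observe $Q \subset B(z_Q, (3\Lambda+1)\delta_n)$, and invoke the same coloring argument to obtain $\sum_{Q \in \tilde\QQ_n}\mu(Q)^q \leq C_3 \, S_q(\mu, A, (3\Lambda+1)\delta_n)$. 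Chaining these estimates with the first step and repeatedly absorbing the bounded scale shifts via \eqref{log1} (so that any $\liminf_n \log S_q(\mu, A, c\delta_n)/\log\delta_n$ over a bounded rescaling of $(\delta_n)$ agrees with $\tau_q(\mu, A)$) identifies $\liminf_n \log T_n/\log\delta_n$ with $\tau_q(\mu, A)$ and completes the proof.
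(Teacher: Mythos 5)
Your first estimate, $T_n \le C_1 S_q(\mu,A,2\Lambda\delta_n)$, is essentially the paper's inequality \eqref{eq:S_second_goal} (the paper packages your greedy colouring as Lemma \ref{thm:covering_thm}\eqref{covering4}), and it correctly yields $\tau_q(\mu,A)\le\liminf_n\log T_n/\log\delta_n$. The reverse inequality, however, has two genuine gaps. First, you only compare $\delta_n$-packings with $\QQ_n$, whereas $\tau_q(\mu,A)$ is a $\liminf$ over \emph{all} $\delta\downarrow 0$. Your closing claim that ``any $\liminf_n \log S_q(\mu,A,c\delta_n)/\log\delta_n$ over a bounded rescaling of $(\delta_n)$ agrees with $\tau_q(\mu,A)$'' is exactly the nontrivial point: only the inequality $\ge$ is automatic for a $\liminf$ along a subsequence of scales, and condition \eqref{log1} permits $\delta_{n+1}\ll\delta_n$ (e.g.\ $\delta_n=2^{-n^2}$), so the discrete-scale $\liminf$ could a priori exceed $\tau_q(\mu,A)$. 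This is why the paper proves \eqref{eq:S_first_goal} for every $\delta\in[\delta_{n+1},\delta_n)$: a cell of $\QQ_n$ can then meet up to $c_3(\delta_n/\delta)^s$ balls of the packing --- an \emph{unbounded} multiplicity, not your constant $C_1$ --- and one must check that this factor is negligible after taking logarithms, using \eqref{log1}. That interpolation step is missing from your argument.

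Second, even at the scale $\delta=\delta_n$ your chain never closes. You correctly observe that cells meeting a ball of $\BB$ need not meet $A$, and so arrive at $S_q(\mu,A,\delta_n)\le C_2\sum_{Q\in\widetilde{\QQ}_n}\mu(Q)^q$ with $\widetilde{\QQ}_n\supset\QQ_n(A)$; but your proposed bridge bounds $\sum_{Q\in\widetilde{\QQ}_n}\mu(Q)^q$ from \emph{above} by $C_3 S_q(\mu,A,(3\Lambda+1)\delta_n)$, i.e.\ by another moment sum. Concatenating gives only $S_q(\mu,A,\delta_n)\le C S_q(\mu,A,(3\Lambda+1)\delta_n)$, a circular statement in which $T_n$ never appears on the right-hand side; since trivially $\sum_{\widetilde{\QQ}_n}\ge T_n$, nothing here produces the needed upper bound of $S_q$ by the partition sum over $\QQ_n(A)$, and the inequality $\tau_q(\mu,A)\ge\liminf_n\log T_n/\log\delta_n$ is not established. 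What is required is an estimate of the form $S_q(\mu,A,\delta)\le C(\delta_n/\delta)^s\sum_{Q\in\QQ_n(A)}\mu(Q)^q$ (the paper's \eqref{eq:S_first_goal}, obtained by covering each $B\in\BB$ by the cells of $\QQ_n(A)$ meeting it and counting multiplicities in both directions); your argument supplies neither the correct right-hand side nor the correct range of $\delta$.
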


\begin{proof}
Let $0<\delta<\delta_1$, and $n\in\N$ so that
$\delta_{n+1}\le\delta<\delta_n$.
Our first goal is to show that for a constant
$c_1=c_1(N,\Lambda,q)>0$, we have
  \begin{equation} \label{eq:S_first_goal}
    S_{q}(\mu,A,\delta) \le c_1\Bigl(\frac{\delta_n}{\delta}\Bigr)^s\sum_{Q \in \QQ_n(A)} \mu(Q)^q,
  \end{equation}
  where $s = s(N) >0$ is the constant given by Lemma \ref{thm:covering_thm}\eqref{covering3}.
  Recall that $N$ is the doubling constant of
  $X$ and $\Lambda$ is the fixed constant used in defining the
  partitions $\QQ_n$.
 To show \eqref{eq:S_first_goal}, we fix a $\delta$-packing $\BB$ of
 $A$ and let
  \begin{equation*}
    \CC_B = \{ Q \in \QQ_n(A) : Q \cap B \ne \emptyset \}
  \end{equation*}
  for all $B \in \BB$. Since $\CC_B$ is a cover for $B$, we have
  \begin{equation*}
    \mu(B)^q \le \biggl( \sum_{Q \in \CC_B} \mu(Q) \biggr)^q \le (\#\CC_B)^q \sum_{Q \in \CC_B} \mu(Q)^q,
  \end{equation*}
  where $\#\CC_B$ is the cardinality of $\CC_B$. Notice that all
  the sets of $\CC_B$ are contained in a ball of radius
  $(1+2\Lambda)\delta_n$ which, on the other hand, has a $\delta_n$-packing of cardinality $\#\CC_B$. Hence, Lemma \ref{thm:covering_thm}\eqref{covering3} implies that $\#\CC_B \le c_2=c_2(N,\Lambda)$ for all $B\in\BB$ and therefore
  \begin{equation*}
    \sum_{B \in \BB} \mu(B)^q \le c_2^q \sum_{B \in \BB} \sum_{Q \in \CC_B} \mu(Q)^q.
  \end{equation*}
  Furthermore, by Lemma \ref{thm:covering_thm}\eqref{covering3} there exists a constant $c_3 = c_3(N,\Lambda)>0$
  so that the cardinality of the set $\{ B \in \BB : Q \cap B \ne \emptyset \}$ is at most $c_3\left(\delta_n/\delta\right)^s$
  for all $Q \in \QQ_n$. Thus, \eqref{eq:S_first_goal} follows with $c_1=c_2^q c_3$.

To find an estimate in the other direction, choose for each $Q \in \QQ_n(A)$ a point $x_Q \in A \cap Q$ and a ball $B_Q$ so that $Q \subset \Lambda B_Q$ and the collection $\{ B_Q : Q \in \QQ_n(A) \}$ is a $\delta_n$-packing. Notice that $Q \subset B(x_Q,2\Lambda\delta_n) \subset 3\Lambda B_Q$ for all $Q \in \QQ_n(A)$. According to Lemma \ref{thm:covering_thm}\eqref{covering4} there exists $M = M(N,\Lambda) \in \N$ and $\QQ_1,\ldots,\QQ_M$ so that $\QQ_n(A) = \bigcup_{i=1}^M \QQ_i$ and $\{ 3\Lambda B_Q : Q \in \QQ_i \}$ is a $3\Lambda\delta_n$-packing for all $i \in \{ 1,\ldots,M \}$. Thus $\{ B(x_Q,2\Lambda\delta_n) : Q \in \QQ_i \}$ is a $2\Lambda\delta_n$-packing of $A$ for all $i \in \{ 1,\ldots,M \}$. Since $\bigcup_{Q \in \QQ_n(A)} Q \subset \bigcup_{i=1}^M \bigcup_{Q \in \QQ_i} B(x_Q,2\Lambda\delta_n)$, we may choose $i \in \{ 1,\ldots,M \}$ so that
\begin{equation} \label{eq:S_second_goal}
  \sum_{Q \in \QQ_n(A)} \mu(Q)^q \le M^{-1}\sum_{Q \in \QQ_i} \mu(B(x_Q,2\Lambda\delta_n))^q \le M^{-1}S_q(\mu,A,2\Lambda\delta_n).
\end{equation}
The proof now follows by combining \eqref{eq:S_first_goal} and \eqref{eq:S_second_goal} and taking logarithms and limits.
\end{proof}

Global entropy dimensions can be defined via partitions if $A$ is compact. Before showing this, we exhibit a small technical lemma.

\begin{lemma}\label{lma:smallentropy}
  Suppose $\mu$ is a measure on a doubling metric space $X$ and $A \subset X$ is bounded. Let $s>0$ and $c>0$ be as in Lemma \ref{thm:covering_thm}\eqref{covering3}. Then
  \[
    \int_A \log\mu(B(y,\delta))\,d\mu(y) \ge - \tfrac1e -\mu(A)\biggl( \log c + s\log\frac{4\diam(A)}{\delta} \biggr)
  \]
  for all $\delta > 0$.
\end{lemma}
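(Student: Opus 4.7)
The plan is to replace the ball $B(y,\delta)$ by a piece of a Borel partition of $A$ of diameter at most $\delta$, reducing the integral to a discrete entropy-type sum that I control via convexity of $x\log x$ together with the bound $x\log x\ge-1/e$.

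Fix any $x_0\in A$, so that $A\subset B(x_0,2\diam(A))$. In the main regime $\delta<4\diam(A)$, Lemma~\ref{thm:covering_thm}\eqref{covering2} produces a cover of $B(x_0,2\diam(A))$ by balls $B(y_1,\delta/2),\ldots,B(y_N,\delta/2)$ with $N\le c(4\diam(A)/\delta)^s$. Disjointifying yields a Borel partition $A=\bigsqcup_{i=1}^N A_i$ with $A_i\subset B(y_i,\delta/2)$, so $\diam A_i\le\delta$ and hence $A_i\subset B(y,\delta)$ for every $y\in A_i$. Therefore $\mu(B(y,\delta))\ge\mu(A_i)$ for $y\in A_i$, which integrates to
\[
\int_A\log\mu(B(y,\delta))\,d\mu(y)\ge\sum_{i=1}^N\mu(A_i)\log\mu(A_i).
\]
Setting $p_i=\mu(A_i)$ and $P=\mu(A)=\sum_i p_i$, convexity of $x\mapsto x\log x$ on $[0,\infty)$ (applied via Jensen's inequality with uniform weights on $\{1,\ldots,N\}$) gives
\[
\sum_{i=1}^N p_i\log p_i\ge N\cdot\tfrac{P}{N}\log\tfrac{P}{N}=P\log P-P\log N.
\]
Bounding $P\log P\ge-1/e$ and $\log N\le\log c+s\log(4\diam(A)/\delta)$ then yields the claimed estimate. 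In the complementary regime $\delta\ge4\diam(A)$, one has $A\subset B(y,\delta)$ for every $y\in A$, so $\mu(B(y,\delta))\ge\mu(A)$ and $\int_A\log\mu(B(y,\delta))\,d\mu(y)\ge\mu(A)\log\mu(A)\ge-1/e$, from which the inequality follows.

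The delicate point is the Jensen bound above: a termwise application of $x\log x\ge-1/e$ would accumulate to $-N/e$, which is ruinous because $N$ grows like $\delta^{-s}$. Convexity ensures that the $-1/e$ loss is paid only once, while the remaining cost is logarithmic in $N$ and therefore absorbed by the covering estimate.
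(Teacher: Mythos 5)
Your argument is essentially the paper's own: the paper likewise reduces the integral to a Borel partition of $A$ into at most $c(4\diam(A)/\delta)^s$ pieces, each of which is contained in $B(y,\delta)$ for every $y$ it contains, and then applies Jensen's inequality to $x\mapsto x\log x$ exactly as you do, paying the $-1/e$ loss only once. The only difference is how the partition is manufactured (the paper disjointifies the $8$-times enlargements of a quartered maximal $\delta/4$-packing, you disjointify a $\delta/2$-cover obtained from Lemma \ref{thm:covering_thm}\eqref{covering2}); this is cosmetic, and your count $N\le c(4\diam(A)/\delta)^s$ matches the paper's bound on $\#\QQ$. One caveat: in the regime $\delta\ge 4\diam(A)$ your conclusion does not actually ``follow'' from $\int_A\log\mu(B(y,\delta))\,d\mu\ge -1/e$, because once $\delta>4c^{1/s}\diam(A)$ one has $\log c+s\log(4\diam(A)/\delta)<0$ and the right-hand side of the lemma exceeds $-1/e$. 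Indeed the lemma as stated is false for such large $\delta$ (take $X=A=\{0,1\}$ with $\mu=\delta_0+\delta_1$: the left side equals $2\log 2$ while the right side grows like $s\mu(A)\log\delta$). This is a defect of the statement rather than of your method --- the paper's proof has the same blind spot, since its bound $\#\QQ\le c(4\diam(A)/\delta)^s$ fails when the right-hand side drops below $1$, and the lemma is only ever invoked for small $\delta$ --- but you should either restrict to $\delta\le 4\diam(A)$ or refrain from claiming the large-$\delta$ case.
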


\begin{proof}
  Let $\BB'$ be a maximal $\delta/4$-packing of $A$ and let $\BB = \tfrac14 \BB' = \{ B_1,B_2,\ldots, B_k\}$. Define $Q_1 = 8B_1 \setminus \bigcup_{B \in \BB \setminus \{ B_1 \}} B$ and
  \begin{equation*}
    Q_{n+1} = \biggl( 8B_{n+1} \setminus \bigcup_{B \in \BB \setminus \{ B_{n+1} \}} B \biggr) \setminus \bigcup_{i=1}^n Q_i
  \end{equation*}
  for all $1\le n\le k-1$. Then $\QQ = \{ Q_1 \cap A, Q_2  \cap A, \ldots, Q_k\cap A \}$ is a $(\delta/16,8)$-partition of $A$ in the relative metric.

  If the unique $Q \in \QQ$ containing $y \in A$ is denoted by $Q(y)$, then we have $Q(y) \subset B(y, \delta)$ for all $y \in A$. By Theorem \ref{thm:covering_thm}\eqref{covering3} there exist constants $s>0$ and $c>0$ depending only on the doubling constant of $A$ so that $\#\QQ \le c(4\diam(A)/\delta)^{s}$. Now Jensen's inequality gives
 \begin{align*}
  \int_A \log\mu(B(y,\delta))\,d\mu(y) & \ge \int_A \log\mu(Q(y))\,d\mu(y) = \sum_{Q \in \QQ}\mu(Q)\log\mu(Q) \\
  &\ge \mu(A)\log\frac{\mu(A)}{\#\QQ} \ge \mu(A)\log \mu(A) - \mu(A)\log \frac{c(4\diam(A))^s}{\delta^s}
 \end{align*}
  and the claim follows.
\end{proof}

\begin{proposition}\label{1proposition}
  If $\mu$ is a measure on a doubling metric space $X$ and $A \subset X$ is compact with $\mu(A)>0$, then
    \begin{align*}
      \udim_1(\mu,A) &=
      \limsup_{n\rightarrow\infty} \frac{\sum_{Q \in \QQ_n(A)}
      \mu(Q)\log\mu(Q)}{\sum_{Q \in \QQ_n(A)}
      \mu(Q) \log \delta_n}, \\
      \ldim_1(\mu,A) &=
      \liminf_{n\rightarrow\infty} \frac{\sum_{Q \in \QQ_n(A)}
      \mu(Q)\log\mu(Q)}{\sum_{Q \in \QQ_n(A)}
      \mu(Q)\log\delta_n}.
    \end{align*}
\end{proposition}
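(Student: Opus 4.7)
My plan is to reduce the claimed identities to a Shannon-entropy comparison between the partition $\QQ_n$ and an auxiliary ball-based partition, with the error being absorbed after dividing by $\log\delta_n\to-\infty$. Setting $A_n=\bigcup_{Q\in\QQ_n(A)}Q$, I first rewrite
\begin{equation*}
\sum_{Q\in\QQ_n(A)}\mu(Q)\log\mu(Q)=\int_{A_n}\log\mu(Q_n(y))\,d\mu(y),\qquad\sum_{Q\in\QQ_n(A)}\mu(Q)=\mu(A_n).
\end{equation*}
Since $A$ is compact and $A_n\subset\{x : \dist(x,A)\le(\Lambda+1)\delta_n\}$, continuity of measure forces $\mu(A_n)\to\mu(A)>0$, so the denominator is asymptotic to $\mu(A)\log\delta_n$. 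Writing $I(\delta):=\frac{1}{\mu(A)\log\delta}\int_A\log\mu(B(y,\delta))\,d\mu$ (so that $\limsup_\delta I=\udim_1(\mu,A)$ and $\liminf_\delta I=\ldim_1(\mu,A)$) and $P_n$ for the right-hand quotient in the proposition, the condition \eqref{log1} ensures that both $(\delta_n)$ and $((\Lambda+1)\delta_n)$ realise the continuous $\limsup$ and $\liminf$ of $I$.

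For the ``easy'' inequality, the inclusion $Q\subset B(y,(\Lambda+1)\delta_n)$ valid for $y\in Q$ gives $\log\mu(Q_n(y))\le\log\mu(B(y,(\Lambda+1)\delta_n))$ pointwise. Integrating over $A_n$ and using negativity of the integrand together with $A\subset A_n$ yields
\begin{equation*}
\sum_{Q\in\QQ_n(A)}\mu(Q)\log\mu(Q)\le\int_A\log\mu(B(y,(\Lambda+1)\delta_n))\,d\mu(y).
\end{equation*}
Dividing by the negative quantity $\mu(A_n)\log\delta_n$ and invoking \eqref{log1} gives $P_n\ge(1+o(1))I((\Lambda+1)\delta_n)$, whence $\limsup_n P_n\ge\udim_1(\mu,A)$ and $\liminf_n P_n\ge\ldim_1(\mu,A)$.

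For the reverse inequalities I introduce an auxiliary partition $\QQ^*_n$ of $A$ constructed exactly as in the proof of Lemma \ref{lma:smallentropy}: the Voronoi-type expansion of a maximal $\delta_n/4$-packing of $A$ yields a $(\delta_n/16,8)$-partition of $A$ with the key property $Q^*(y)\subset B(y,\delta_n)$ for every $y\in A$; hence
\begin{equation*}
\int_A\log\mu(B(y,\delta_n))\,d\mu(y)\ge\sum_{Q^*\in\QQ^*_n}\mu(Q^*)\log\mu(Q^*).
\end{equation*}
Both $\QQ_n(A)$ and $\QQ^*_n$ consist of sets of diameter $O(\delta_n)$, so by Lemma \ref{thm:covering_thm}\eqref{covering3} each cell of one intersects only $K=K(N,\Lambda)$ cells of the other. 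Normalising $\mu|_{A_n}$ to a probability measure $\nu$ and extending $\QQ^*_n$ to a partition of $A_n$ by a single residual cell $A_n\setminus A$, the standard conditional-entropy estimate $H_\nu(\QQ_n\vee\QQ^*_n)\le H_\nu(\QQ_n)+\log K$ together with its symmetric counterpart and $H_\nu(\QQ_n\vee\QQ^*_n)\ge\max(H_\nu(\QQ_n),H_\nu(\QQ^*_n))$ gives $|H_\nu(\QQ_n)-H_\nu(\QQ^*_n)|\le\log K+\tfrac1e$. Undoing the normalisation (using $\mu(A_n)\to\mu(A)$ to align the prefactors on the two probability measures involved), the two partition sums differ by a constant $C=C(N,\Lambda,\mu(A))$, so
\begin{equation*}
\sum_{Q\in\QQ_n(A)}\mu(Q)\log\mu(Q)\ge\int_A\log\mu(B(y,\delta_n))\,d\mu(y)-C.
\end{equation*}
Division by $\mu(A_n)\log\delta_n\to-\infty$ absorbs this $O(1)$ error and produces the matching upper bounds for $\limsup_n P_n$ and $\liminf_n P_n$.

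I expect the main obstacle to be the partition-independence step in the reverse direction: $\QQ_n$ is a partition of the ambient space $X$ whereas $\QQ^*_n$ partitions only $A$, so the common refinement and the conditional-entropy bookkeeping must be set up on a common set of finite measure (e.g.\ $A_n$) and one has to control the mass that the boundary cells $Q\in\QQ_n(A)$ contribute outside $A$. Compactness of $A$ enters precisely here, both to ensure $\mu(A_n)\to\mu(A)>0$ and to keep the normalisation errors bounded, which is consistent with Example \ref{1example} showing that the statement fails for non-compact $A$.
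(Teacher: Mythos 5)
Your ``easy'' direction (the inclusion $Q_n(y)\subset B(y,2\Lambda\delta_n)$, integration, and division by $\mu(A_n)\log\delta_n$ with $\mu(A_n)\to\mu(A)$ from compactness) is essentially sound and matches the first half of the paper's argument, modulo the harmless point that $\log\mu(B(y,\cdot))$ need not be negative on $A_n\setminus A$ (the resulting error is $O(\mu(A_n\setminus A))$ and is absorbed by the division by $\log\delta_n$). The problem is in the reverse direction. From the key property $Q^*(y)\subset B(y,\delta_n)$ you get $\mu(Q^*(y))\le\mu(B(y,\delta_n))$, hence
\[
\int_A\log\mu(B(y,\delta_n))\,d\mu\ \ge\ \sum_{Q^*\in\QQ^*_n}\mu(Q^*)\log\mu(Q^*),
\]
and from the entropy comparison you get $\bigl|\sum_{Q}\mu(Q)\log\mu(Q)-\sum_{Q^*}\mu(Q^*)\log\mu(Q^*)\bigr|\le C$. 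Together these yield $\sum_{Q}\mu(Q)\log\mu(Q)\le\int_A\log\mu(B(y,\delta_n))\,d\mu+C$, which, after dividing by the negative quantity $\mu(A_n)\log\delta_n$, is the \emph{same} inequality $P_n\ge(1+o(1))I(\delta_n)$ you already proved. Your displayed conclusion $\sum_{Q}\mu(Q)\log\mu(Q)\ge\int_A\log\mu(B(y,\delta_n))\,d\mu-C$ does not follow from these two ingredients: for it you would need the opposite estimate $\int_A\log\mu(B(y,\delta_n))\,d\mu\le\sum_{Q^*}\mu(Q^*)\log\mu(Q^*)+o(|\log\delta_n|)$, i.e.\ an \emph{upper} bound for the integral by a partition sum, and that is precisely the nontrivial half of the proposition, which your argument never supplies.

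Moreover, that missing upper bound cannot be extracted from $\QQ^*_n$ as you have built it, because $\QQ^*_n$ partitions only $A$ while $\mu(B(y,\delta_n))$ counts mass of $\mu$ lying outside $A$; no union of cells of $\QQ^*_n$ covers $B(y,\delta_n)$, so there is no inequality of the form $\mu(B(y,\delta_n))\le\sum_{Q'\in\CC^*}\mu(Q')$ available. The paper obtains the upper bound by covering $B(y,2\Lambda\delta_n)$ with a bounded number of \emph{ambient} cells $Q'\in\QQ_n$ (which do carry the outside mass), writing $\log\sum_{Q'\in\CC_Q}\mu(Q')\le\log\mu(Q)+\sum_{Q'\in\CC_Q\setminus\{Q\}}\mu(Q')/\mu(Q)$, and summing the errors to $c_4(3\Lambda)^s\mu(B_0)=O(1)$ by bounded overlap; compactness and Lemma \ref{lma:smallentropy} enter only afterwards, to replace $\int_{A_n}$ by $\int_A$ at the cost of $o(|\log\delta_n|)$. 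Your conditional-entropy bookkeeping is a reasonable repackaging of that bounded-overlap trick, but to make it work you would have to run it against a partition of a full neighbourhood of $A$ whose cells cover the balls $B(y,\delta_n)$ --- at which point you are back to the ambient cells $\QQ_n$ and the paper's argument. (Your concern about the residual cell $A_n\setminus A$ in the common refinement is legitimate but secondary; the directional error above is the real gap.)
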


\begin{proof}
Choose for each $Q \in \QQ_n(A)$ a ball $B_Q$ such that $Q \subset \Lambda B_Q$ and $\{ B_Q : Q \in \QQ_n(A) \}$ is a $\delta_n$-packing. If $Q \in \QQ_n(A)$, then for every $y \in Q$ we have
\begin{equation*}
  Q \subset B(y,2\Lambda\delta_n) \subset 3\Lambda B_Q \subset
  \bigcup_{Q' \in \CC_Q} Q',
\end{equation*}
where $\CC_Q = \{ Q' \in \QQ_n(A) : Q' \cap 3\Lambda B_Q \ne \emptyset \}$. Thus, letting $A_n = \bigcup_{Q \in \QQ_n(A)} Q$, we get
\begin{equation*}
\begin{split}
  \sum_{Q \in \QQ_n(A)} \mu(Q)\log\mu(Q) &\le \sum_{Q \in \QQ_n(A)} \int_{Q} \log\mu(B(y,2\Lambda\delta_n))\,d\mu(y) \\
  &= \int_{A_n} \log\mu(B(y,2\Lambda\delta_n))\,d\mu(y) \le \sum_{Q \in \QQ_n(A)} \mu(Q)\log\sum_{Q' \in \CC_Q} \mu(Q').
\end{split}
\end{equation*}
Moreover, since each $Q' \in \QQ_n(A)$ is contained in at most $c_4(3\Lambda)^{s}$ collections $\CC_Q$ by Lemma \ref{thm:covering_thm}(3), where $c_4=c_4(N)<\infty$, we have
 \begin{align*}
  \sum_{Q \in \QQ_n(A)} \mu(Q) \log&\sum_{Q' \in \CC_Q} \mu(Q')
   -\sum_{Q \in \QQ_n(A)} \mu(Q)\log\mu(Q) \\
  &= \sum_{Q \in \QQ_n(A)} \mu(Q)\log\biggl( 1 + \frac{\sum_{Q' \in \CC_Q \setminus \{ Q \}} \mu(Q')}{\mu(Q)} \biggr) \\
  &\le \sum_{Q \in \QQ_n(A)} \sum_{Q' \in \CC_Q \setminus \{ Q \}} \mu(Q') \le c_4(3\Lambda)^s\mu(B_0),
 \end{align*}
where $B_0$ is a ball centered at $A$ with radius $\diam(A) + 2\Lambda\delta_n$. Putting these estimates together, we get
 \begin{equation}\label{eq:comb}
 \begin{split}
   \sum_{Q \in \QQ_{n}(A)} \mu(Q)\log\mu(Q) &\le \int_{A_n} \log\mu(B(y,\delta))\,d\mu(y) \\ &\le \sum_{Q \in \QQ_{n-1}(A)} \mu(Q)\log\mu(Q) + c_4(3\Lambda)^s\mu(B_0)
 \end{split}
 \end{equation}
for all $2\Lambda\delta_n \le \delta \le 2\Lambda\delta_{n-1}$.

Since $A$ is compact we have $\lim_{n \to \infty} \mu(A_n \setminus A) = 0$ and therefore, by Lemma \ref{lma:smallentropy},
\[
  \lim_{n \to \infty}\frac{1}{\log \delta_n}\int_{A_n \setminus A} \log\mu(B(y,2\Lambda\delta_n))\,d\mu(y) = 0.
\]
From this, \eqref{log1} and \eqref{eq:comb} the claim follows easily.
\end{proof}

\begin{example} \label{1example}
  In this example, we show that the claim in Proposition \ref{1proposition} does not hold for non-compact sets. Equip $X = [0,1]$ with the Euclidean metric and let $\QQ_n$ be the partition of $X$ to the dyadic intervals of length $2^{-n}$. Let $A = \Q \cap [0,1] = \{ q_1,q_2,\ldots \}$ and $\nu = \sum_{i=1}^\infty 2^{-i} \delta_{q_i}$, where $\delta_x$ denotes the Dirac unit mass located at $x$. Finally, set $\mu = \LL^1|_{[0,1]} + \nu$, where $\LL^1$ denotes the Lebesgue measure.

  Since $\sum_{Q \in \QQ_n} \mu(Q)\log\mu(Q) \le \log 2^{-n}$ for all $n$ large enough,
  we have
  \begin{equation*}
    \liminf_{n\rightarrow\infty} \frac{\sum_{Q \in \QQ_n(A)}
      \mu(Q)\log\mu(Q)}{\sum_{Q \in \QQ_n(A)}
      \mu(Q)\log 2^{-n}} = \liminf_{n \to \infty} \frac{\sum_{Q \in \QQ_n} \mu(Q) \log\mu(Q)}{2\log 2^{-n}} \ge \tfrac12.
  \end{equation*}
  Let $\eps>0$, choose $k \in \N$ so that $\sum_{i=k+1}^\infty 2^{-i} < \eps$, and define $A' = \{ q_1,\ldots,q_k \}$. According to Lemma \ref{lma:smallentropy}, there exists $c > 0$ so that
  \begin{align*}
    \int_{A'} \log\mu(B(y,\delta))\,d\mu(y) &\ge -\tfrac{1}{e} - \nu(A')k \log 2, \\
    \int_{A \setminus A'} \log\mu(B(y,\delta))\,d\mu(y) &\ge -\tfrac{1}{e} - \nu(A \setminus A')\bigl( \log c + \log(4/\delta) \bigr)
  \end{align*}
  for all $\delta>0$ small enough.
  Since $\nu(A \setminus A')<\eps$, we get
  \begin{equation*}
    \udim_1(\mu,A) = \limsup_{\delta \downarrow 0} \int_A \frac{\log\mu(B(y,\delta))}{\log\delta}  \le \limsup_{\delta \downarrow 0} \frac{-\tfrac{2}{e} - \nu(A')k\log 2 - \eps\log(4c) + \eps\log\delta}{\log\delta} = \eps.
  \end{equation*}
  Thus $\udim_1(\mu,A) = 0$.
\end{example}

\begin{remark}
In view of the definitions of $\dim_q$, it is natural to ask if the entropy dimensions could also be defined in terms of maximal packings. However, simple examples such as
$\mu = \LL^1|_{[0,1]} + \delta_1$ on $[0,1]$ show that this is usually not possible.
\end{remark}

To finish this section, we show that the definition of the entropy dimension as $\dim_1$ is consistent with the monotonicity of the $L^q$-dimensions.
The proof is standard and it is presented for the convenience of the reader.

\begin{proposition} \label{prop:dim1}
  If $\mu$ is a measure on a doubling metric space $X$ and $A\subset X$ compact with $\mu(A) > 0$, then
  \begin{equation*}
    \lim_{q \downarrow 1} \dim_q(\mu,A) \le \ldim_1(\mu,A) \le \udim_1(\mu,A) \le \lim_{q \uparrow 1} \dim_q(\mu,A).
  \end{equation*}
\end{proposition}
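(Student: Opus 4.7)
The plan is to pass to the partition-based formulations of Propositions~\ref{qproposition} and~\ref{1proposition} and then relate the resulting sums by a single application of Jensen's inequality to the concave function $\log$. For each $n$, set $A_n = \bigcup_{Q \in \QQ_n(A)} Q$ and view $p_Q = \mu(Q)/\mu(A_n)$ as a probability vector on $\QQ_n(A)$. Since $A$ is compact, $\mu(A_n) \to \mu(A) > 0$, and hence $\log\mu(A_n)/\log\delta_n \to 0$; this is the only place where compactness of $A$ is used.

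Applying Jensen's inequality to $\log$ with the random variable $\mu(Q)^{q-1}$ (distributed according to $p_Q$) yields, for every $q \in \R$ and every $n$,
\[
  \log \sum_{Q \in \QQ_n(A)} \mu(Q)^q - \log \mu(A_n) \;\ge\; \frac{q-1}{\mu(A_n)} \sum_{Q \in \QQ_n(A)} \mu(Q) \log \mu(Q).
\]
I then divide through by $(q-1)\log \delta_n$ and take limits, separating two sign cases. For $q > 1$ the factor $(q-1)\log\delta_n$ is negative, so the inequality flips; taking $\liminf_n$ and discarding the vanishing $\log\mu(A_n)/\log\delta_n$ term identifies the left-hand side with $\dim_q(\mu,A)$ by Proposition~\ref{qproposition} and the right-hand side with $\ldim_1(\mu,A)$ by Proposition~\ref{1proposition}, giving $\dim_q(\mu,A) \le \ldim_1(\mu,A)$; letting $q \downarrow 1$ yields the leftmost inequality. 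For $q < 1$ the factor $(q-1)\log\delta_n$ is positive, so the inequality is preserved; taking $\limsup_n$ and noting that dividing by the negative scalar $q-1$ swaps $\liminf$ and $\limsup$, so that $\dim_q(\mu,A) = \limsup_n \log\!\sum_Q \mu(Q)^q/((q-1)\log\delta_n)$, I obtain $\udim_1(\mu,A) \le \dim_q(\mu,A)$, and letting $q \uparrow 1$ gives the rightmost inequality. The middle inequality $\ldim_1(\mu,A) \le \udim_1(\mu,A)$ is immediate from the definitions.

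The only subtlety lies in the bookkeeping of signs when dividing by $(q-1)\log\delta_n$ and in observing that $\log\mu(A_n)/\log\delta_n \to 0$, which is precisely where compactness of $A$ is invoked (otherwise the error term need not vanish). Beyond this, the argument is a mechanical deployment of Jensen together with the partition descriptions of $\tau_q$ and of the entropy dimensions already proved earlier in this section.
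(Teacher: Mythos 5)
Your proof is correct and follows essentially the same route as the paper's: your single application of Jensen's inequality to $\log$ is exactly the tangent-line inequality at $q=1$ for the convex function $h_n(q)=\log\sum_{Q\in\QQ_n(A)}\mu(Q)^q$ that the paper obtains via H\"older, and the subsequent sign bookkeeping and appeal to Propositions \ref{qproposition} and \ref{1proposition} are identical. (Only your side remark is slightly off: compactness of $A$ really enters through Proposition \ref{1proposition} itself, not through $\mu(A_n)\to\mu(A)$, since $\log\mu(A_n)/\log\delta_n\to 0$ already follows from $\mu(A)\le\mu(A_n)\le\mu(B_0)$ for a fixed ball $B_0$.)
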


\begin{proof}
The existence of the limits follows from \cite[Proposition 2.7]{KaenmakiRajalaSuomala2012}. Thus, the claims follow if we can show that
\begin{equation}\label{goal}
\tau_q(\mu,A)/(q-1) \ge \udim_1(\mu,A) \ge \ldim_1(\mu,A) \ge \tau_p(\mu,A)/(p-1),
\end{equation}
where $0<q<1<p$.
Define $h_n(q) = \log \sum_{Q \in \QQ_n(A)} \mu(Q)^q$ for all $q\geq0$.
A simple application of H\"older's inequality shows that $h_n$ is
convex. As $\QQ_n(A)$ has only a finite number of elements, $h_n$
is differentiable with $h_n'(1) = \bigl( \sum_{Q \in \QQ_n(A)} \mu(Q) \bigr)^{-1} \sum_{Q \in \QQ_n(A)} \mu(Q)\log\mu(Q)$. Thus
  \begin{equation*}
    \frac{h_n(q)-h_n(1)}{q-1} \le h_n'(1) \le \frac{h_n(p)-h_n(1)}{p-1}.
  \end{equation*}
Using these estimates and the fact that $h_n(1)=\log\sum_{Q \in \QQ_n(A)}\mu(Q)$ does not depend on $n$, we calculate
\begin{align*}
\frac{1}{q-1}&\liminf_{n\to\infty}
\frac{\log\sum_{Q\in\QQ_n(A)}\mu(Q)^{q}}{\log \delta_n}=\limsup_{n\to\infty}
  \frac{h_n(q)-h_n(1)}{(q-1)\log \delta_n}\\
&\geq\limsup_{n\to\infty}\frac{\sum_{Q \in \QQ_n(A)}
\mu(Q)\log\mu(Q)}{\sum_{Q \in \QQ_n(A)}\mu(Q)\log \delta_n}
\geq\liminf_{n\to\infty}\frac{\sum_{Q \in \QQ_n(A)}
\mu(Q)\log\mu(Q)}{\sum_{Q \in \QQ_n(A)}\mu(Q)\log \delta_n}\\
&\geq\liminf_{n\to\infty}
  \frac{h_n(p)-h_n(1)}{(p-1)\log \delta_n}=\frac1{p-1}\liminf_{n\to\infty}
  \frac{\log\sum_{Q\in\QQ_n(A)}\mu(Q)^{p}}{\log \delta_n}.
\end{align*}
The desired estimate \eqref{goal} now follows from Propositions \ref{qproposition} and \ref{1proposition}.
\end{proof}

\section{Local dimension and multifractal analysis for Moran measures}\label{sec:results}

We now turn towards our final goal to study multifractality of measures in metric spaces.
We first introduce a class of Moran constructions
in a complete doubling metric space $X$ and show how
Theorem \ref{thm:pointwise_dimq} can be applied to calculate the local
dimensions for a large class of measures defined on these Moran
fractals. Then, under certain additional assumptions, we turn to study the multifractal spectrum of these measures. Our main aim is to show that using the technique
introduced in \cite{KaenmakiRajalaSuomala2012}, we can push the standard methods
used to calculate the local dimensions for self-similar measures on
Euclidean spaces (see \cite{CawleyMauldin1992, Riedi1995, Falconer1997})
to obtain analogous results in doubling
metric spaces with very mild regularity assumptions, see Remark \ref{rem:moran}.

\subsection{Moran constructions and measures}

Let $m\in\N$,
$\Sigma = \{ 1,\ldots,m \}^\N$, $\Sigma_n = \{
1,\ldots,m \}^n$ for all $n \in \N$, and $\Sigma_* = \{\varnothing\}\cup\bigcup_{n \in
  \N} \Sigma_n$. If $n \in \N$ and $\iii \in \Sigma \cup
\bigcup_{j=n}^\infty \Sigma_j$, then we let $\iii|_n =
(i_1,\ldots,i_n)$ (and $\iii|_0=\varnothing$). The concatenation of two words $\iii \in \Sigma_*$
and $\jjj \in \Sigma \cup \Sigma_*$ is denoted by $\iii\jjj$. We also
set $\iii^- = \iii|_{n-1}$ for $\iii \in \Sigma_n$ and $n \in
\N$. By $|\iii|$, we  denote the length of a word $\iii\in\Sigma_*$.
We assume that
$\{ E_\iii : \iii \in \Sigma_* \}$ is a collection of compact subsets of
$X$ that satisfy the following conditions for some constants $0<C_0,C_1<\infty$:
\begin{enumerate}
  \item[(M1)] $E_\iii \subset E_{\iii^-}$ for all $\varnothing\neq\iii \in \Sigma_*$.\label{M1}
  \item[(M2)] $E_{\iii i}\cap E_{\iii j}=\emptyset$ if\label{M2}
    $\iii\in\Sigma_*$ and $i\neq j$.
  \item[(M3)] For each $\iii\in\Sigma_*$, there is $x\in E_\iii$ such
    that $B\left(x,C_0 \diam(E_\iii)\right)\subset E_\iii$.\label{M3}
  \item[(M4)] $\diam(E_{\iii|_n})\rightarrow 0$ as
    $n\rightarrow\infty$, for each $\iii\in\Sigma$.
  \item[(M5)] $\diam(E_{\iii^-})/\diam(E_{\iii})\leq C_1<\infty$ for all $\varnothing\neq\iii\in\Sigma_*$.\label{M5}
\end{enumerate}
We define the limit set of the construction as
$E=\bigcap_{n\in\N}\bigcup_{\iii\in\Sigma_n}E_\iii$ and given
$\iii\in\Sigma$, denote by $x_\iii$ the point obtained as
$\{x_\iii\}=\bigcap_{n\in\N}E_{\iii|_n}$. If $\iii\in\Sigma_*$, we denote $x_\iii:=x_{\iii000\cdots}$.
We assume that for each
  $i\in\{1,\ldots,m\}$ there is a continuous function $r_i\colon
  E\to(0,1)$. Given $x\in E$, and $\iii\in\Sigma_n$, we let
  $r_\iii(x)=\prod_{k=1}^{n}r_{i_k}(x)$. Moreover, we assume that
\begin{enumerate}
  \item[(M6)]
    $\lim_{n\rightarrow\infty}\log \diam(E_{\iii|_n})/\log r_{\iii|_n}(x_\iii)=1$
    uniformly for all $\iii\in\Sigma$.
\end{enumerate}

The following further conditions on the collection $\{E_\iii : \iii\in\Sigma_*\}$ are needed only in subsection \ref{lmf}. For $n\in\N$, denote
$\mathcal{E}_n=\{E_\iii\,:\,\diam(E_\iii)\le C_1/(C_0 2^{n})<\diam(E_{\iii^-})\}$.
\begin{enumerate}
\item[(M7)] There is $c>0$ so that for each $Q\in\mathcal{E}_n$, there
  is $x\in E$ with $B(x,c2^{-n})\subset Q$.
\item[(M8)] $\lim_{r\downarrow 0}\log r/\log\bigl(\diam(E_{\iii|_{n(\iii,r)}})\bigr)=1$ for all $\iii\in\Sigma$, where $n(\iii,r)=\max\{n\in\N : B(x_\iii,r)\cap E \subset E_{\iii|_n}\}$.
\end{enumerate}

Our next lemma shows how we can obtain a
$\delta$-partition of $X$ from the elements of $\{E_\iii\}$ which are
roughly of size $\delta$.

\begin{lemma}\label{partitionlemma}
If $\{ E_\iii : \iii \in \Sigma_* \}$ is a collection of compact sets satisfying the conditions (M1)--(M5), then for every $n \in \N$ there is a $(2^{-n})$-partition $\mathcal{Q}_n$ of $X$ such that each $E_\iii\in\mathcal{E}_n$ is a subset of some $Q\in\mathcal{Q}_n$ and all elements of $\mathcal{Q}_n$ contain at most one element of $\mathcal{E}_n$.
\end{lemma}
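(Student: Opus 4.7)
The plan is to construct $\QQ_n$ as an ownership-based partition whose cells are indexed by a carefully chosen $2^{-n}$-packing of $X$ that is anchored inside the elements of $\mathcal{E}_n$.

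First I would verify that the elements of $\mathcal{E}_n$ are pairwise disjoint. If $\iii$ were a proper prefix of some $\jjj$ with both $E_\iii,E_\jjj\in\mathcal{E}_n$, then $E_{\jjj^-}\subset E_\iii$ would yield $\diam(E_{\jjj^-})\le\diam(E_\iii)\le C_1/(C_0 2^n)$, contradicting the defining inequality of $\mathcal{E}_n$. Hence distinct indices in $\mathcal{E}_n$ are incomparable and (M1)--(M2) force the sets to be disjoint. Then, for each $E_\iii\in\mathcal{E}_n$, (M5) combined with the defining inequality gives $\diam(E_\iii)\ge\diam(E_{\iii^-})/C_1>1/(C_0 2^n)$, and (M3) supplies $y_\iii\in E_\iii$ with $B(y_\iii,C_0\diam(E_\iii))\subset E_\iii$. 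In particular the ball $b_\iii:=B(y_\iii,2^{-n})$ lies inside $E_\iii$, so the collection $\{b_\iii\}_{E_\iii\in\mathcal{E}_n}$ is automatically a $2^{-n}$-packing.

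Next I would extend this collection greedily (using doubling to ensure countability) to a maximal $2^{-n}$-packing $\BB=\{b_\iii\}\cup\{B(z_\beta,2^{-n})\}_{\beta\in J}$ of $X$. With $F:=\bigcup_{E_\iii\in\mathcal{E}_n}E_\iii$, I would define $\QQ_n$ by an ownership rule: if $x\in F$, assign $x$ to the cell $Q_\iii$ determined by the unique $\iii$ with $x\in E_\iii$; otherwise, assign $x$ to the cell whose center in $\BB$ is nearest, breaking ties via a fixed enumeration of $\BB$. By Lemma \ref{thm:covering_thm}\eqref{covering3}, only finitely many centers lie within any fixed distance of $x$, so the nearest-center rule is well defined, and the resulting cells are Borel. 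This produces a countable Borel partition of $X$ indexed by the balls of $\BB$, in which each $E_\iii\in\mathcal{E}_n$ sits inside the single cell $Q_\iii$ and no cell indexed by a $z_\beta$ meets any element of $\mathcal{E}_n$.

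To finish I would check that $\QQ_n$ is a $(2^{-n},\Lambda)$-partition with $\Lambda:=\max\{2,C_1/C_0\}$. For $x\in E_\iii$, $d(x,y_\iii)\le\diam(E_\iii)\le C_1/(C_0 2^n)$; for $x\in Q_\iii\setminus E_\iii$, maximality of $\BB$ together with the nearest-center rule force $d(x,y_\iii)\le 2\cdot 2^{-n}$. Hence $Q_\iii\subset\Lambda b_\iii$, and a fortiori $Q_\beta\subset 2B(z_\beta,2^{-n})\subset\Lambda B(z_\beta,2^{-n})$ for the auxiliary cells. The only real subtlety --- and the point where the hypotheses (M3) and (M5) are essential --- is reconciling the two scales: forcing each $E_\iii$ into a single cell drags in extra mass whose diameter is a priori controlled only by $C_1/(C_0 2^n)$, not by the packing scale $2^{-n}$, and $\Lambda=\max\{2,C_1/C_0\}$ is precisely the inflation factor that absorbs this mismatch.
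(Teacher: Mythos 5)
Your construction is correct and is essentially the paper's own argument: both seed a Voronoi-type ownership partition of $X$ with the elements of $\mathcal{E}_n$ together with a maximal auxiliary family of $2^{-n}$-balls, and both bound the inflation constant $\Lambda$ in terms of $C_0$ and $C_1$ alone. The only cosmetic difference is that the paper assigns points by distance to the seed \emph{sets} and places the auxiliary balls inside $X\setminus\bigcup\mathcal{E}_n$ (so each $E_\iii$ automatically claims its own points), whereas you enforce $E_\iii\subset Q_\iii$ by an explicit membership override and then use a nearest-centre rule.
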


\begin{proof}
Consider a maximal collection $\mathcal{B}_n$ of disjoint balls of radius
$2^{-n}$ contained in $X\setminus\bigcup\mathcal{E}_n$. Define $\mathcal{A}_n=\mathcal{E}_n\cup\mathcal{B}_n=\{A_1,A_2,\ldots\}$. For each
$x\in X$,
we let
$i_x=\min\{j\in\N\,:\,\dist(x,A_j)=\min_{A\in\mathcal{A}_n}\dist(x,A)\}$
and
set $Q_{A_i}=\{x\in X\,:\, i_x=i\}$ for all $i\in\N$.
It is
then easy to see that $\mathcal{Q}_n=\{Q_A\,:A\in\mathcal{A}_n\}$ is the desired
$(2^{-n})$-packing. Observe that each $Q\in\mathcal{Q}_n$ is a Borel set
since $\bigcup_{i=1}^k Q_{A_i}$ is closed for all $k$. Moreover, the constant $\Lambda$ of this
partition depends only on the constants $C_0$ and $C_1$ as one may choose $\Lambda=C_0C_1+1$.
\end{proof}

Let $\mu$ be a probability measure on $X$ with $\spt(\mu)=E$. Then for each
$\iii\in\Sigma_*$, $\mu$ induces a probability
vector $p_\iii=(p_\iii^{1},\ldots,p_\iii^{m})$ with $p_\iii^{i}>0$ for $i\in\{1,\ldots,m\}$ such that $\mu(E_{\iii
i})=p_\iii^{i}\mu(E_\iii)$ for $i\in\{1,\ldots,m\}$. Given $\iii\in\Sigma_n$, we denote
$\mu_\iii:=\mu(E_\iii)=\prod_{j=1}^{n}p_{\iii|_{j-1}}^{i_j}$. In the next theorem, we assume that the weights $p_\iii$ are controlled in terms of continuous probability functions $p(x)=\bigl( p_1(x),\ldots,p_m(x) \bigr)$. More precisely, we assume that
 for each $i\in\{1,\ldots,m\}$, the function $p_i\colon
E\rightarrow(0,1)$ is continuous with $\sum_{i=1}^m p_i(x)=1$ for
all $x\in E$.
Similarly to $r_i$, we define $P_\iii(x)=\prod_{k=1}^{n}p_{i_k}(x)$ when $\iii\in\Sigma_n$.

\subsection{Local $L^q$-spectrum for Moran measures}

\begin{theorem}\label{thm:localdimappl}
Let $\{E_\iii : \iii\in\Sigma_*\}$ be a collection of compact sets that satisfy the conditions (M1)--(M6). Suppose that $\mu$ is a probability measure on $E$ and let $p_\iii$ and $p$ be as above. If
$p_{\iii|_n}\rightarrow p(x_\iii)$ as $n\rightarrow\infty$ uniformly
for all $\iii\in\Sigma$, then, for all $x\in E$ and all $q\ge0$,
$\tau_q(\mu,x)$ is the unique $\tau\in\R$ that satisfies
\begin{equation}\label{tauformula}
\sum_{i=1}^m p_i(x)^qr_i(x)^{-\tau}=1.
\end{equation}
Moreover,
\begin{equation}\label{dimformula}
  \dim_1(\mu,x) = \dimloc(\mu,x) = \frac{\sum_{i=1}^m p_i(x)\log
    p_i(x)}{\sum_{i=1}^m p_i(x)\log r_i(x)}
\end{equation}
for $\mu$-almost all $x \in E$.
\end{theorem}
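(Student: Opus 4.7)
My plan is to compute $\tau_q(\mu,x)$ directly from the Moran data and then derive \eqref{dimformula} from it using Theorem \ref{thm:pointwise_dimq}. The map $\tau\mapsto\sum_i p_i(x)^q r_i(x)^{-\tau}$ is continuous and strictly decreasing from $\infty$ to $0$ on $\R$ (each $r_i(x)\in(0,1)$), so \eqref{tauformula} has a unique root $\tau=\tau(x,q)$. Using $P_\iii(x)=\prod_k p_{i_k}(x)$ and $r_\iii(x)=\prod_k r_{i_k}(x)$, the identity $\sum_i p_i(x)^q r_i(x)^{-\tau}=1$ factorises to $\sum_{\iii\in\Sigma_n}P_\iii(x)^q r_\iii(x)^{-\tau}=1$ for every $n$. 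Since each $\iii\in\Sigma_n$ has, for $n$ large, a unique prefix in the cut $\Gamma_\delta=\{\iii\in\Sigma_*:r_\iii(x)<\delta\le r_{\iii^-}(x)\}$, telescoping yields the exact identity $\sum_{\iii\in\Gamma_\delta}P_\iii(x)^q r_\iii(x)^{-\tau}=1$, whence $\sum_{\iii\in\Gamma_\delta}P_\iii(x)^q\asymp\delta^\tau$ because $r_\iii(x)\in[\delta\min_i r_i(x),\delta)$ on $\Gamma_\delta$.

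To pass to $\tau_q(\mu,B(x,r))$, I invoke Proposition \ref{qproposition} with the $2^{-n}$-partitions from Lemma \ref{partitionlemma}: each $Q\in\QQ_n$ meets $E$ in at most one $E_\iii\in\EE_n$ (and then $Q\cap E=E_\iii$), so the moment sum reduces to $\sum\mu(E_\iii)^q$ taken over $\iii$ with $E_\iii\in\EE_n$ and $Q_{E_\iii}\cap B(x,r)\ne\emptyset$. Given $\eps>0$, for $r$ small and $n$ large I will show the uniform approximations
\[
|\log\mu(E_\iii)-\log P_\iii(x)|\le\eps n,\qquad |\log\diam(E_\iii)-\log r_\iii(x)|\le\eps n
\]
for every such $\iii$; the first combines the uniform convergence $p_{\iii|_k}\to p(x_\iii)$ with continuity of the $p_i$ at $x$ (using that $x_\iii\in E_\iii$ lies in a small neighbourhood of $x$), and the second combines (M6) with continuity of the $r_i$. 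These place the $\iii$'s entering the sum in a window $r_\iii(x)\in[c_1e^{-\eps n}2^{-n},c_2e^{\eps n}2^{-n}]$, so the moment sum is at most $Ce^{O(\eps n)}2^{-n\tau}$ after covering by finitely many $\Gamma_\delta$'s. The matching lower bound comes from restricting to descendants of the ancestor $E_{\jjj|_K}$ of $x$ with $\diam E_{\jjj|_K}\asymp r$: those $E_\iii$'s lie in $B(x,r)$, and the product structure together with the same telescoping identity yields $\ge c\cdot 2^{-n\tau}$. Dividing the logarithm by $-n\log 2$ and sending $n\to\infty$, then $\eps,r\downarrow 0$, gives $\tau_q(\mu,x)=\tau$.

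The formula \eqref{dimformula} follows by implicit differentiation. At $q=1$ the equation \eqref{tauformula} is solved by $\tau(1)=0$ (since $\sum_i p_i(x)=1$), and differentiating in $q$ yields $\tau'(1)=\sum_i p_i(x)\log p_i(x)/\sum_i p_i(x)\log r_i(x)$, so $\lim_{q\to 1}\dim_q(\mu,x)=\lim_{q\to 1}\tau_q(\mu,x)/(q-1)=\tau'(1)$. Theorem \ref{thm:pointwise_dimq} then sandwiches $\dim_1(\mu,x)$ and, $\mu$-almost everywhere, $\dimloc(\mu,x)$, at this common value.

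The principal obstacle I expect is the uniform estimate $|\log\mu(E_\iii)-\log P_\iii(x)|\le\eps n$, since the first few factors of $\mu(E_\iii)=\prod_j p^{i_j}_{\iii|_{j-1}}$ are not controlled by the uniform convergence of $p_\iii$. The remedy is a two-scale split: choose $K=K(\eps)$ from the uniform convergence so that $|p_{\iii|_k}^i-p_i(x_\iii)|<\eps$ for all $k\ge K$; the first $K$ factors contribute only an $O(1)$ error (since the $p_i$ are bounded away from $0$ on a compact neighbourhood), while for the remaining $n-K$ factors both uniform convergence and the continuity modulus of $p_i$ at $x$ give an $O(\eps)$ error per factor, provided $r$ is chosen small enough to force $x_\iii$ into that modulus.
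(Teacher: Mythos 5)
Your proposal is correct and follows essentially the same route as the paper's proof: reduce to the partition moment sums of Proposition \ref{qproposition} via Lemma \ref{partitionlemma}, exploit the telescoping identity $\sum_{\iii}P_\iii(x)^qr_\iii(x)^{-\tau}=1$ over cuts, control $\mu(E_\iii)$ and $\diam(E_\iii)$ by $P_\iii(x)$ and $r_\iii(x)$ up to a sub\-exponential factor using the uniform convergence $p_{\iii|_n}\to p(x_\iii)$, continuity of $p_i,r_i$, and (M6) (the paper's parameter $c<1$ with the bound $|\jjj|\le C_2n$ plays the role of your $e^{\eps n}$ window and two-scale split), and then obtain \eqref{dimformula} by implicit differentiation combined with Theorem \ref{thm:pointwise_dimq}. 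The only cosmetic difference is that the paper indexes the cut by the partition scale $2^{-n}$ (the sets $Z_n$) rather than by your $\Gamma_\delta$, which amounts to the same decomposition.
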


\begin{proof}
We prove the claim \eqref{tauformula}. The identities \eqref{dimformula}
then follow from \eqref{tauformula} by implicit differentiation together with
Theorem \ref{thm:pointwise_dimq}.

For each $n\in\N$, let $\mathcal{Q}_n$ be as in
Lemma \ref{partitionlemma}. Given $\varnothing\neq\iii\in\Sigma_*$, we denote by
$Q_\iii$ the unique element of $\bigcup_{n\in\N}\mathcal{Q}_n$ that
contains $E_\iii$ and does not contain $E_{\iii^-}$ (we assume without
loss of generality that $\mathcal{E}_1 = \{ E_\varnothing \}$ so that this makes
sense for all $n$).
Let us fix $q\ge 0$, $x\in E$ and let $\iii\in\Sigma$ so that $x=x_\iii$. Let
$\tau$ be as in \eqref{tauformula}. We first prove that
$\tau_q(\mu,x)\ge\tau$. Let $0<c<1$.
Since
$p_{\iii|_n}\rightarrow p(x_\iii)$ uniformly and $y\mapsto p(y)$ is
continuous, we may choose $n_0$ so large that
$p_{\jjj}^i>cp_i(x)$ whenever $i\in\{1,\ldots,m\}$, $\jjj\in\Sigma_*$,
 and $E_\jjj\subset E_{\iii|_{n_0}}$. Making $n_0$ even larger if
necessary, we may also assume that
\begin{equation}\label{tahhti}
c r_i(y)\le r_i(x)\le
r_i(y)/c
\end{equation}
for all $y\in E_{\iii|_{n_0}}$ and all $i \in \{1,\ldots,m\}$.

Now, for all $r>0$, we
choose $N_0\ge n_0$ so that $Q_{\jjj}\subset B(x,r)$ whenever
$\jjj\in\Sigma_*$,
and $E_\jjj\subset E_{\iii|_{N_0}}$. Given
$n\ge N_0$, let
$Z_{n}=\{\jjj\in\Sigma_* : Q_\jjj\in\mathcal{Q}_n\text{ and
}E_\jjj\subset E_{\iii|_{N_0}}\}$.
Let $\varepsilon_n=\min_{\jjj\in
  Z_{n}}\bigl(\diam(E_\jjj)/r_\jjj(x)\bigr)^{-\tau}$. Now, denoting
$c_0=C_{1}^{-|\tau|}\mu_{\iii|_{N_0}}^q$,
we get an estimate
\begin{equation}\label{alku}
\begin{split}
2^{n\tau}\sum_{\jjj\in Z_{n}}\mu(Q_\jjj)^q&\ge C_{1}^{-|\tau|}\sum_{\jjj\in
  Z_{n}}\mu_{\jjj}^q \diam(E_{\jjj})^{-\tau}\ge c_0\varepsilon_n
\sum_{\jjj\in Z_{n}}c^{q|\jjj|}P_{\jjj}(x)^q r_{\jjj}(x)^{-\tau}.
\end{split}
\end{equation}
For each $\jjj\in Z_{n}$, pick $y\in E_\jjj$.
Using (M6),
we may assume that $\log r_\jjj(y)\ge
2 \log \diam(E_\jjj) $ by making $N_0$ larger if necessary. Letting
$r_{\max}=\max\bigl\{r_i(y) : y\in E \text{ and } i \in \{ 1,\ldots,m\} \bigr\}$, we have
\begin{equation*}
\log r_{\max}^{|\jjj|}\ge \log r_\jjj(y)\ge 2\log
\diam(E_\jjj)\ge -2n\log 2,
\end{equation*}
and consequently,
\begin{equation}\label{kaks}
|\jjj|\le \frac{2\log \diam(E_{\jjj})}{\log r_{\max}} \le\frac{-2n \log 2}{\log r_{\max}}\le n C_2,
\end{equation}
for a constant $C_2<\infty$ independent of $n$.
On the other hand,
\begin{equation}\label{kol}
\sum_{\jjj\in
  Z_{n}}P_\jjj(x)^q r_{\jjj}(x)^{-\tau}=P_{\iii|_{n_0}}(x)^q
r_{\iii|_{n_0}}(x)^{-\tau}=: C_3
\end{equation}
by iterative use of \eqref{tauformula}.
Putting \eqref{alku}--\eqref{kol} together, we get
\begin{equation}\label{nel}
\log\sum_{\jjj\in Z_{n}}\mu(Q_\jjj)^q\ge\log(2^{-n\tau} c_0 \eps_n c^{qnC_2} C_3)
=\log 2^{-n\tau}+\log\varepsilon_n +q n C_2\log c+\log (c_0C_3).
\end{equation}
To estimate $\log \varepsilon_n$, we choose $\jjj\in Z_n$ such
that $\varepsilon_n=\bigl(\diam(E_\jjj)/r_\jjj(x)\bigr)^{-\tau}$. Then
\begin{equation}\label{kuus}
\log\varepsilon_n=-\tau\log \diam(E_\jjj)\bigl(1-\log r_\jjj(x)/\log \diam(E_\jjj)\bigr).
\end{equation}
Moreover,
$\log r_{\jjj}(y)+|\jjj|\log c\le\log r_{\jjj}(x)\le\log r_{\jjj}(y)-|\jjj|\log c$
for all $y\in E_\jjj$ by \eqref{tahhti}.
Using \eqref{kaks}, this gives
\begin{equation}\label{viis}
\frac{\log r_\jjj(y)}{\log \diam(E_\jjj)}+C_4\log c\le\frac{\log
  r_\jjj(x)}{\log \diam(E_\jjj)}\le\frac{\log r_\jjj(y)}{\log \diam(E_\jjj)}-C_4\log c
\end{equation}
for some constant $0<C_4<\infty$.

Using \eqref{nel}, \eqref{kuus}, \eqref{viis}, and (M6), we finally get
\begin{align*}
\liminf_{n\rightarrow\infty}\frac{\log \sum_{Q \in \QQ_n(B(x,r))} \mu(Q)^q}{\log 2^{-n}}&\le\liminf_{n\rightarrow\infty}\frac{\log\sum_{\jjj\in
    Z_{n}}\mu(Q_\jjj)^q}{\log 2^{-n}} \le\tau-(qC_2+|\tau|C_4)\log c/\log 2.
\end{align*}
As $c<1$ and $r>0$ can be chosen arbitrarily, we get, by recalling Proposition \ref{qproposition}, that $\tau_q(\mu,x)\le\tau$.

To prove that $\tau_q(\mu,x)\ge\tau$, we first fix $0<c<1$ and $r_0>0$ so
that $cp_\jjj^i<p_i(x)$ and $cr_i(x)<r_i(y)<\tfrac1c r_i(x)$ whenever $i\in\{1,\ldots,m\}$ and
$y\in E_\jjj\subset B(x,r_0)$. Then, if $0<r<r_0$, we may
find $n_0\in\N$ and finitely many elements
$E_\kkk\in\mathcal{Q}_{n_0}$, $E_\kkk\subset B(x,r_0)$
whose union covers $B(x,r)$. For each such
$E_\kkk$, and $n\ge n_0$, we put
$Z_{n,\kkk}=\{\jjj\in\Sigma_*\,:\,Q_\jjj\in\mathcal{Q}_n\text{ and
}E_\jjj\subset E_\kkk \}$. Putting $M_n=\max_{\jjj\in
  Z_{n,\kkk}}\bigl(\diam(E_\jjj)/r_\jjj(x)\bigr)^{-\tau}$, we may estimate as in
\eqref{alku} to obtain
\begin{equation*}
2^{n\tau}\sum_{\jjj\in Z_{n,\kkk}}\mu(Q_\jjj)^q\le C_5M_{n}
\sum_{\jjj\in Z_{n,\kkk}}c^{-q|\jjj|}P_\jjj(x)^qr_{\jjj}(x)^{-\tau}.
\end{equation*}
Calculating as above, this implies
\begin{equation*}
\liminf_{n\rightarrow\infty}\frac{\log \sum_{Q \in \QQ_n(B(x,r))}
  \mu(Q)^q}{\log 2^{-n}}\ge\tau+(qC_2+|\tau|C_4)\log c/\log 2.
\end{equation*}
Letting $r\downarrow 0$ and then $c\uparrow 1$, and using Proposition \ref{qproposition} gives $\tau_q(\mu,x)\ge\tau$.
\end{proof}

\begin{remark}\label{rem:moran}
(1) One can find Moran constructions that satisfy (M1)--(M6) on doubling metric spaces satisfying only
mild regularity assumptions on the space $X$. For instance, it suffices to assume that the space is uniformly perfect.
Different types of Moran constructions in metric spaces have been recently
studied in \cite{RajalaVilppolainen2009}.

(2) The result is interesting already in $\R^n$. We remark that
a self-similar measure on a self-similar set
satisfying the strong separation conditions is
a model case for Theorem \ref{thm:localdimappl} in the special case
when $p_i$ and $r_i$ are constant, see \cite{Falconer1997}.
However, as $p_i$ and $r_i$ are allowed to vary depending
on the point, Theorem \ref{thm:localdimappl} can be applied in
more general situations.

(3) One further difference to the self-similar situation
is that in Moran constructions the location of $E_\iii$ inside $E_{\iii^-}$ can be chosen quite freely,
whereas with similitude mappings the location of $E_\iii$ is strictly dictated by the maps.
Consequently, even in the simplest case where we would force a Moran construction in $\R^n$ to obey $\diam(E_\iii) = r^{|\iii|}$ for some $0<r<1$ and all $\iii \in \Sigma_*$, the limit set $E$ would not necessarily be bi-Lipschitz
 equivalent to a self-similar set.
\end{remark}

\subsection{Some multifractal analysis}\label{lmf}

To approach \eqref{eq:formalsim}, we have to deal with $\tau_q$ for negative values
of $q$ and for this we use the following lemma. Observe that we
cannot use Proposition \ref{qproposition} when $q<0$.

\begin{lemma}\label{taulemmaq<0}
Suppose that in the setting of Theorem \ref{thm:localdimappl} also
(M7) holds. Then, for all $x\in E$, $\tau_q(\mu,x)$ is determined by \eqref{tauformula} also when $q<0$.
\end{lemma}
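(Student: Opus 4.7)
The plan is to replay the proof of Theorem~\ref{thm:localdimappl} with minor modifications. The only substantive obstacle is that Proposition~\ref{qproposition} fails for $q<0$: a $\delta$-packing ball of very small $\mu$-mass contributes a large term $\mu(B)^q$ to $S_q$, so packing sums need not be comparable to sums over the partition $\QQ_n$ of Lemma~\ref{partitionlemma}. Hypothesis (M7) is precisely the ingredient that supplies a substitute.

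The first step is to show, for each small $r>0$, that $\tau_q(\mu,B(x,r))$ is sandwiched between two quantities of the form $\liminf_{n\rightarrow\infty}\log\sum_{E_\jjj}\mu(E_\jjj)^q/\log 2^{-n}$, where the sum runs over the cells of $\mathcal{E}_n$ (respectively $\mathcal{E}_{n+k}$) lying in a slight enlargement of $B(x,r)$. For a lower bound on $S_q(\mu,B(x,r),\cdot)$ I would invoke (M7) to pick $z_\jjj\in E$ with $B(z_\jjj,c2^{-n})\subset E_\jjj$ for every $E_\jjj\in\mathcal{E}_n$ with $E_\jjj\subset B(x,r)$; by disjointness of the cells these balls form a $(c2^{-n})$-packing of $B(x,r)\cap E$, and $\mu(B(z_\jjj,c2^{-n}))^q\ge\mu(E_\jjj)^q$ because $q<0$. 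For the upper bound on $S_q$ I would fix $k\in\N$ with $2^k\ge C_1/C_0$, so that every $E_\jjj\in\mathcal{E}_{n+k}$ has diameter at most $2^{-n}$; given a $2^{-n}$-packing $\BB$ of $B(x,r)\cap E$, the centre $y$ of each $B\in\BB$ lies in a unique $E_{\jjj(y)}\in\mathcal{E}_{n+k}$ contained in $B(y,2^{-n})$, which gives $\mu(B)^q\le\mu(E_{\jjj(y)})^q$, and the map $B\mapsto\jjj(B)$ is injective because the centres of $\BB$ are $2\cdot 2^{-n}$-separated while $\diam(E_{\jjj(y)})\le 2^{-n}$. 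The constant factors $c$ and $2^{-k}$ in the scales disappear in the liminf thanks to~\eqref{log1}.

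With this partition-based description of $\tau_q(\mu,B(x,r))$ in hand, the second step is to repeat the Moran computation in the proof of Theorem~\ref{thm:localdimappl}, which evaluates both extreme liminfs as the unique $\tau$ determined by~\eqref{tauformula}. That computation rests on the uniform two-sided control of $p_\jjj^i$ near $p_i(x)$, the analogous control~\eqref{tahhti} for $r_i$, the estimate~\eqref{kaks} on $|\jjj|$, and (M6), none of which is sensitive to the sign of $q$. For $q<0$ several of the inequalities in \eqref{alku}--\eqref{viis} reverse direction, but the resulting error factors remain of the form $c^{\pm qnC}$ with $C$ independent of $n$, and they are annihilated once $c\uparrow 1$ is taken after first sending $r\downarrow 0$.

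The main difficulty I anticipate is the first step. Without a lower bound on the $\mu$-mass of balls centred in $E$, packing sums for $q<0$ can blow up in a way unrelated to the Moran structure; it is precisely the interior balls supplied by (M7), together with the injective cell-assignment afforded by the choice $2^k\ge C_1/C_0$, that make the reduction to cell sums go through cleanly and allow the rest of the proof of Theorem~\ref{thm:localdimappl} to be reused verbatim.
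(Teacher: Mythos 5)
Your argument is correct and follows essentially the same route as the paper: the paper likewise replaces Proposition~\ref{qproposition} by a two-sided comparison between packing sums and partition sums, using (M7) to produce a $(c2^{-n})$-packing of interior balls for the lower bound and the fact that each packing ball of radius comparable to $2^{-n}$ contains a whole cell of a slightly finer generation for the upper bound, and then reruns the computation of Theorem~\ref{thm:localdimappl} with the inequalities reversed. The only (cosmetic) difference is that you work directly with the Moran cells $E_\jjj\in\mathcal{E}_n$ where the paper uses the associated partition elements $Q\in\QQ_{n,t}$ from Lemma~\ref{partitionlemma}.
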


\begin{proof}
Let $q<0$, $x\in E$ and let $\tau\in\R$ be the unique solution of \eqref{tauformula}.
With trivial modifications to the proof of Theorem \ref{thm:localdimappl}, we
see that
\begin{equation}\label{aa}
\tau=\lim_{t \downarrow 0} \liminf_{n\rightarrow\infty}
      \frac{\log\sum_{Q \in \QQ_{n,t}} \mu(Q)^q}{\log 2^{-n}}
\end{equation}
where $\mathcal{Q}_{n,t}=\{Q\in\QQ_n : Q\subset B(x,t)\text{ and
}Q\cap E\neq\emptyset\}$. (Observe that $\spt(\mu)=E$.)

In order to prove that $\tau=\tau_q(\mu,x)$, let $t>0$, $2^{-n}\le \delta<2^{-n+1}<t$ and $y\in E \cap B(x,t)$. Then there is $n_0\in\N$ depending only on the numbers $C_0$ and
$C_1$ so that $B(y,\delta)\supset Q_\iii$ for some
$Q_\iii\in\mathcal{Q}_{n+n_0,2t}$. Thus, for any $\delta$-packing
$\{B_i\}$ of $B(x,t)\cap\spt(\mu)$, we have
\begin{equation}\label{pee}
\sum_{i}\mu(B_i)^q\le\sum_{Q\in\QQ_{n+n_0,2t}}\mu(Q)^q.
\end{equation}

To get an estimate in the other direction, we fix $n$ and $t$ and use
the assumption (M7) to find for each $Q\in\mathcal{Q}_{n,t}$ a point
$y\in \spt(\mu)\cap B(x,t)$ such that for $B_Q=B(y,c 2^{-n})$, we have $B_Q\subset Q$.
Thus, for the $(c 2^{-n})$-packing $\{B_Q : Q\in\mathcal{Q}_{n,t}\}$, we have
\begin{equation}\label{see}
\sum_{Q\in\mathcal{Q}_{n,t}}\mu(B_Q)^q\ge\sum_{Q\in\mathcal{Q}_{n,t}}\mu(Q)^q.
\end{equation}
Combining \eqref{aa}--\eqref{see}, and taking logarithms, it follows
that $\tau_q(\mu,x)=\tau$.
\end{proof}

\begin{remark}
Inspecting the proofs of Theorem \ref{thm:localdimappl} and Lemma
\ref{taulemmaq<0}, we observe that in the setting of these results,
$\liminf_{n \to \infty}$ in the definition of $\tau_q(\mu,x)$ can actually be
replaced by $\lim_{n \to \infty}$.
\end{remark}

To complete the paper, we show how the local $L^q$-spectrum can be
used in the setting of Theorem \ref{thm:localdimappl}. We derive a
local multifractal formalism for the spectrum
\[
f_{\textrm{H}}(\alpha,x)=\lim_{r\downarrow
  0}\dimh\bigl(\{y\in B(x,r)\,:\,\dimloc(\mu,y)=\alpha\}\bigr)\]
for $x\in X$ and $\alpha\ge0$. The corresponding packing spectrum,
$f_{\textrm{p}}(\alpha,x)$ is defined by replacing $\dimh$ by $\dimp$ above.

Let $\alpha_{\min}(x)\le\alpha_{\max}(x)$ be the asymptotic derivatives of $q\mapsto\tau_q(\mu,x)$. Thus, $\alpha_{\min}(x) = \min\bigl\{ \log p_i(x)/\log r_i(x) : i \in \{ 1,\ldots,m \} \bigr\}$ and $\alpha_{\max}(x) = \max\bigl\{ \log p_i(x)/\log r_i(x) : i \in \{ 1,\ldots,m \} \bigr\}$. Then it is easy to check that
\begin{equation}\label{eq:min-max}
\bigcap_{r>0}\bigcup_{y\in B(x,r)\cap E}\left[\ldimloc(\mu,y),\udimloc(\mu,y)\right]\subset[\alpha_{\min}(x),\alpha_{\max}(x)]
\end{equation}
for
any $x\in E$.

\begin{theorem}\label{thm:localmultifractal}
Let $\{E_\iii : \iii\in\Sigma_*\}$ be a collection of compact sets that satisfy the conditions (M1)--(M8). If $p_{\iii|_n}\to p(x_\iii)$ uniformly for all $\iii\in\Sigma$, then
\[
  f_{\emph{H}}(\alpha,x) = f_{\emph{p}}(\alpha,x) = \inf_{q\in\R}\{\alpha q-\tau_q(\mu,x)\}
\]
for all $x\in E$ and $\alpha_{\min}(x)\le\alpha\le\alpha_{\max}(x)$.
\end{theorem}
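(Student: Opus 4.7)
The plan is to establish $f_{\text{p}}(\alpha,x) \le \inf_q\{\alpha q - \tau_q(\mu,x)\}$ and $f_{\text{H}}(\alpha,x) \ge \inf_q\{\alpha q - \tau_q(\mu,x)\}$ separately; combined with the general inequality $f_{\text{H}}(\alpha,x) \le f_{\text{p}}(\alpha,x)$, these force the three quantities to agree.

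For the upper bound, I would fix $q\in\R$ and apply the standard multifractal covering argument to the level set $E_\alpha(x,r) := \{y\in B(x,r) : \dimloc(\mu,y)=\alpha\}$. For each $y\in E_\alpha(x,r)$ and small $\delta>0$ one has $\mu(B(y,\delta))=\delta^{\alpha+o(1)}$, so any $\delta$-packing $\BB$ with centres in $E_\alpha(x,r)$ satisfies $\sum_{B\in\BB}\mu(B)^q$ comparable to $\#\BB\cdot\delta^{q\alpha}$ up to an $\eps$-error. Proposition~\ref{qproposition} (for $q\ge0$) and Lemma~\ref{taulemmaq<0} (for $q<0$) bound this sum above by $\delta^{\tau_q(\mu,B(x,r))-\eps}$, yielding an upper box-counting, hence packing, bound $\dimp E_\alpha(x,r)\le q\alpha-\tau_q(\mu,B(x,r))+O(\eps)$. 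Sending $r\downarrow 0$ with $\tau_q(\mu,B(x,r))\to\tau_q(\mu,x)$, then $\eps\downarrow 0$, and infimising over $q$ delivers the upper bound.

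For the lower bound I would construct, for each fixed $q$, an auxiliary Gibbs-type Moran measure. Write $x=x_\iii$ and choose $n_0=n_0(r)$ so large that $E_{\iii|_{n_0}}\subset B(x,r)$. On the sub-Moran family $\{E_{\iii|_{n_0}\jjj} : \jjj\in\Sigma_*\}$ introduce the weights
\[
  \pi^q_i(y) = \frac{p_i(y)^q r_i(y)^{-\tau_q(\mu,x)}}{Z(y,q)}, \qquad Z(y,q)=\sum_j p_j(y)^q r_j(y)^{-\tau_q(\mu,x)}.
\]
By \eqref{tauformula} we have $Z(x,q)=1$, and by continuity $Z(\cdot,q)\to 1$ uniformly on $E_{\iii|_{n_0}}$ as $r\downarrow 0$. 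Let $\nu_q$ be the resulting Moran probability measure. Applying Theorem~\ref{thm:localdimappl} to $\nu_q$ gives $\dimloc(\nu_q,y)=G_q(y):=\frac{\sum_i \pi^q_i(y)\log\pi^q_i(y)}{\sum_i \pi^q_i(y)\log r_i(y)}$ for $\nu_q$-a.e.\ $y$. A parallel Borel--Cantelli argument in the spirit of Proposition~\ref{thm:localdim}, now with $\nu_q$ as the reference measure but tracking $\mu$-masses of cylinders via (M6) and $p_{\iii|_n}\to p(x_\iii)$, yields $\dimloc(\mu,y)=F_q(y):=\frac{\sum_i \pi^q_i(y)\log p_i(y)}{\sum_i \pi^q_i(y)\log r_i(y)}$ for the same $y$. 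At $y=x$, implicit differentiation of \eqref{tauformula} gives $F_q(x)=\alpha$ and $G_q(x)=\alpha q-\tau_q(\mu,x)$.

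The main obstacle is that the identity $\dimloc(\mu,y)=\alpha$ holds only approximately on $\spt(\nu_q)$: by continuity, $F_q(y)\to\alpha$ and $G_q(y)\to\alpha q-\tau_q(\mu,x)$ uniformly on the support as $r\downarrow 0$, but exact equality fails off a thin set. My plan to bridge this is as follows. First, for each $\eta>0$ choose $r$ small enough that $|F_q(y)-\alpha|,|G_q(y)-(\alpha q-\tau_q(\mu,x))|<\eta$ throughout $\spt(\nu_q)$; the mass distribution principle applied to $\nu_q$ then gives $\dimh\{y\in B(x,r) : |\dimloc(\mu,y)-\alpha|<\eta\}\ge \alpha q-\tau_q(\mu,x)-\eta$. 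Second, to upgrade this approximate-level-set bound to the exact set $E_\alpha(x,r)$, I would combine it with the already-established upper bound $\dimh E_{\alpha'}(x,r)\le \tau^*(\alpha'):=\inf_{q'}(\alpha' q'-\tau_{q'}(\mu,x))$ valid for every $\alpha'$, and exploit continuity of the Legendre transform $\tau^*$: any $\alpha'\in(\alpha-\eta,\alpha+\eta)$ witnessing the approximate bound must satisfy $\tau^*(\alpha')\ge \tau^*(\alpha)-\eta$, so by strict concavity $\alpha'\to\alpha$ as $\eta\downarrow 0$. A diagonal argument over $(r,\eta)\downarrow(0,0)$, coupled with the fact that $f_{\text{H}}(\cdot,x)$ is the infimum in $r$ of upper semicontinuous level-set dimensions, pins $\alpha'$ to $\alpha$ in the limit and yields $f_{\text{H}}(\alpha,x)\ge\alpha q-\tau_q(\mu,x)$. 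Taking the infimum over $q$ then completes the proof. I expect this bridging from approximate to exact level sets to be the subtle step, since in the truly self-similar (constant-weight) case it is trivial but here genuinely uses the continuity of the probability and ratio functions through the vanishing oscillation of $F_q$ on $\spt(\nu_q)$ as $r\downarrow 0$.
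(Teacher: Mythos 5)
Your overall strategy (upper bound by moment-sum counting, lower bound by an auxiliary Gibbs-type measure) is reasonable, and the upper-bound half can be made to work here --- though note that $\tau_q(\mu,B(x,r))$ is defined with a $\liminf$, so the bound $S_q(\mu,B(x,r),\delta)\le\delta^{\tau_q-\eps}$ holds a priori only along a subsequence of scales; for a \emph{packing} (upper box) dimension bound you need it for all small $\delta$, which in this setting is rescued by the remark following Lemma \ref{taulemmaq<0} that the $\liminf$ is in fact a limit. The genuine gap is in your lower bound, precisely at the step you yourself flag as subtle. Your measure $\nu_q$ freezes the exponent $\tau_q(\mu,x)$ at the centre $x$, so a $\nu_q$-typical point $y$ has $\dimloc(\mu,y)=F_q(y)$, which differs from $\alpha$ as soon as $y\ne x$; hence $\nu_q$ charges only the approximate level set $A_\eta=\{y:|\dimloc(\mu,y)-\alpha|<\eta\}$. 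The proposed bridge from $\dimh(A_\eta)\ge \alpha q-\tau_q(\mu,x)-\eta$ to a bound on the exact set $E_\alpha$ does not work: $A_\eta$ contains the uncountable union $\bigcup_{|\alpha'-\alpha|<\eta}E_{\alpha'}$ together with the (possibly large) set where the local dimension does not exist, and the Hausdorff dimension of such a union can strictly exceed $\sup_{\alpha'}\dimh(E_{\alpha'})$. There is therefore no single $\alpha'$ ``witnessing'' the approximate bound, the upper bound $\dimh(E_{\alpha'})\le\tau^*(\alpha')$ gives no leverage, and the diagonal argument over $(r,\eta)$ pins down nothing: even if it produced $\alpha'\to\alpha$, lower bounds for $\dimh(E_{\alpha'})$ do not pass to the limit level set.

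What is actually needed --- and what the paper does in Lemma \ref{lemma:spectrum} --- is an auxiliary measure that gives \emph{full measure to the exact level set} $E_\alpha$. This is achieved by re-optimising the exponent at every cylinder: $\nu(E_{\iii i})=(p_\iii^i)^{q_\iii}r_{\iii i}^{-\tau_\iii}\nu(E_\iii)$ with $q_\iii=q_\alpha(x_\iii)$ and $\tau_\iii=\tau_{q_\iii}(\mu,x_\iii)$, so that the construction keeps targeting local dimension exactly $\alpha$ at every depth. The statement $\nu(X\setminus E_\alpha)=0$ is then proved by a genuine large-deviation estimate (Lemma \ref{prop:tekninen}, inequalities \eqref{eq:gammaexists}--\eqref{eq:gammaexists2}), which gives $\nu\bigl(\{x_\iii:\mu(E_{\iii|_n})<\diam(E_{\iii|_n})^{\alpha+3\eta}\}\bigr)\le C\gamma^n$ with $\gamma<1$ uniformly in $n$, so that Borel--Cantelli applies for each fixed $\eta$ and one lets $\eta\downarrow0$ through a countable sequence \emph{after} passing to the full-measure set. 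Both the Hausdorff lower bound and the packing upper bound then follow from Billingsley-type density theorems for this single measure, and the dependence on the base point is removed by applying the lemma to the localized constructions on $E_{\iii|_{n(r)}}$ and $E_{\iii|_{N(r)}}$ and using the uniform continuity of $q_\iii,\tau_\iii$ from Lemma \ref{prop:tekninen}. Your proposal also omits the boundary cases $\alpha=\alpha_{\min}(x)$ and $\alpha=\alpha_{\max}(x)$, where $q_\alpha(x)$ degenerates and the paper needs a separate argument bounding $\dimp$ of $\{y\in B(x,r):\udimloc(\mu,y)\le\alpha_{\min}(x)+\eps\}$ by $c\eps$.
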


\begin{corollary}\label{cor:spectrum}
Let $\{E_\iii : \iii\in\Sigma_*\}$ be a collection of compact sets that satisfy the conditions (M1)--(M8). Suppose that $r=(r_1,\ldots, r_m)$
and $p=(p_1,\ldots,p_m)$ are constant functions and $\mu$ is
a measure with $\spt(\mu)=E$ such that $p_{\iii|_n}\rightarrow p$
uniformly for all $\iii\in\Sigma$. If
$0\leq \alpha_{\min}\le\alpha_{\max}$ are the asymptotic
derivatives of the concave function $q\mapsto\tau_q(\mu)$,
then \eqref{eq:formalsim} holds for all $\alpha_{\min}\le\alpha\le\alpha_{\max}$.
\end{corollary}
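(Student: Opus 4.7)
The plan is to reduce the global multifractal formalism to the local one supplied by Theorem \ref{thm:localmultifractal}. The crucial simplification in the constant coefficient setting is that the local $L^q$-spectrum $\tau_q(\mu,x)$ is independent of $x\in E$: by Theorem \ref{thm:localdimappl} (for $q\ge0$) and Lemma \ref{taulemmaq<0} (for $q<0$), it equals the unique $\tau\in\R$ solving $\sum_{i=1}^m p_i^q r_i^{-\tau}=1$. Denote this common value by $\tau_q$. My first task is to verify that $\tau_q(\mu)=\tau_q$. For $q\ge 0$, the inequality $\tau_q(\mu)\le\tau_q$ is immediate, since every $\delta$-packing of $B(x,r)\cap\spt(\mu)$ is also a $\delta$-packing of $\spt(\mu)$, so $S_q(\mu,B(x,r),\delta)\le S_q(\mu,X,\delta)$. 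The reverse inequality follows by rerunning the argument in the proof of Theorem \ref{thm:localdimappl} with $Z_n$ replaced by the global index set $\{\jjj : Q_\jjj\in\QQ_n\}$; because $p$ and $r$ are constant, the identity $\sum_{i=1}^m p_i^qr_i^{-\tau}=1$ forces the resulting telescoped sum to equal $1$, yielding $\tau_q(\mu)\ge\tau_q$. The case $q<0$ is handled analogously by adapting Lemma \ref{taulemmaq<0}.

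Once the identification $\tau_q(\mu)=\tau_q(\mu,x)=\tau_q$ is established for every $x\in E$, Theorem \ref{thm:localmultifractal} yields
\[f_{\textrm{H}}(\alpha,x)=f_{\textrm{p}}(\alpha,x)=\inf_{q\in\R}\{\alpha q-\tau_q\}\]
at every $x\in E$ and every $\alpha_{\min}\le\alpha\le\alpha_{\max}$. For the upper bound, fix $\eps>0$ and for each $x\in E$ choose $r_x>0$ so that both $\dimh(E_\alpha\cap B(x,r_x))$ and $\dimp(E_\alpha\cap B(x,r_x))$ are at most $\inf_{q\in\R}\{\alpha q-\tau_q\}+\eps$. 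Compactness of $E$ yields a finite subcover $\{B(x_j,r_{x_j})\}_{j=1}^k$ of $E$, and since $E_\alpha\subset E$ (for $x\notin E$ one has $\mu(B(x,r))=0$ for small $r$, and hence $\dimloc(\mu,x)=+\infty$) while both $\dimh$ and $\dimp$ are finitely stable,
\[\dimh(E_\alpha)\le\max_{1\le j\le k}\dimh(E_\alpha\cap B(x_j,r_{x_j}))\le\inf_{q\in\R}\{\alpha q-\tau_q\}+\eps,\]
and similarly for $\dimp$; letting $\eps\downarrow 0$ closes the upper bound. The lower bound is immediate from monotonicity: for any $x\in E$ and any $r>0$, $\dimh(E_\alpha)\ge\dimh(E_\alpha\cap B(x,r))$, and sending $r\downarrow 0$ gives $\dimh(E_\alpha)\ge f_{\textrm{H}}(\alpha,x)=\inf_{q\in\R}\{\alpha q-\tau_q\}$, with the same reasoning for $\dimp$.

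The principal technical point is the first step, the identification $\tau_q(\mu)=\tau_q$. For $q\ge 0$ this is a routine globalisation of the proof of Theorem \ref{thm:localdimappl}, but for $q<0$ one cannot appeal to Proposition \ref{qproposition} and must instead use assumption (M7) as in Lemma \ref{taulemmaq<0} in order to replace partition sums by packing sums from below. Once that is in place, the passage from the local spectrum to the global formalism is the clean compactness-plus-monotonicity argument above, with no further technical surprises.
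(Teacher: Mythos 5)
Your proposal is correct and follows essentially the route the paper intends: the paper states Corollary \ref{cor:spectrum} without proof, the implicit argument being that in the constant case $\tau_q(\mu,x)$ and hence $f(\alpha,x)$ are independent of $x$, so that Lemma \ref{lemma:spectrum} (via Theorem \ref{thm:localmultifractal}) gives \eqref{eq:formalsim} directly. You correctly isolate and justify the one point the paper glosses over, namely the identification $\tau_q(\mu)=\tau_q(\mu,x)$ for all $q\in\R$ (monotonicity for one inequality, a global rerun of the arguments of Theorem \ref{thm:localdimappl} and Lemma \ref{taulemmaq<0} for the other), and your compactness-plus-finite-stability globalisation of $f_{\textrm{H}}$ and $f_{\textrm{p}}$ is a sound, if slightly more roundabout, substitute for invoking Lemma \ref{lemma:spectrum} directly on $E$.
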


\begin{remark}
For the inhomogeneous Bernoulli products on $[0,1]$, the claims of Theorem \ref{thm:localdimappl} and Corollary \ref{cor:spectrum} have been obtained independently by Batakis and Testud \cite[Corollary 1.3]{BatakisTestud2011}. In particular, our result shows that the result is true not only in higher dimensional Euclidean spaces, but also for all Moran constructions satisfying (M1)--(M8) in all doubling metric spaces.
\end{remark}

Theorem \ref{thm:localmultifractal} is derived from Lemma \ref{lemma:spectrum}
below.
The main idea is similar to that of the proof of \cite[Proposition
11.4]{Falconer1997}, but the definition of the auxiliary measures is perhaps more delicate in our setting.

Let $\sup_{x\in E}\alpha_{\min}(x)<\alpha<\inf_{x\in E}\alpha_{\max}(x)$.
In what follows, we use the following notation: Denote $f(\alpha,x)=\min_{q\in\R}\{\alpha q-\tau_q(\mu,x)\}$ for all $x\in E$
and by $q(x)=q_\alpha(x)$ the value of $q$ for which the minimum is attained. For $\iii\in\Sigma_*\cup\Sigma$ and $1\le i\le \iii$, we also set $r_\iii=r_\iii(x_\iii)$, $r_{\iii}^{i}=r_i(x_\iii)$, $q_\iii=q(x_\iii)$, and $\tau_\iii=\tau_{q_\iii}(\mu,x_\iii)$.

We make use of the following technical lemma.

\begin{lemma}\label{prop:tekninen}
Under the assumptions of Theorem \ref{thm:localmultifractal}, if
$\sup_{x\in E}\alpha_{\min}(x)<\alpha<\inf_{x\in E}\alpha_{\max}(x)$,
then
$r_{\iii|_n}\rightarrow r_\iii$, $q_{\iii|_n}\rightarrow q_\iii$ and $\tau_{\iii|_n}\rightarrow\tau_\iii$
 for all $\iii\in\Sigma$ unifromly as $n\rightarrow\infty$. Moreover, given $\eta>0$, there are $\delta>0$, $C<\infty$, and $\gamma<1$ such that
\begin{align}\label{eq:gammaexists}
&\sum_{\iii\in\Sigma_n}\mu_{\iii}^{q_\iii-\delta} r_{\iii}^{\delta(\alpha+\eta)-\tau_\iii}\le C\gamma^n, \\
&\sum_{\iii\in\Sigma_n}\mu_{\iii}^{q_\iii+\delta}
\label{eq:gammaexists2} r_{\iii}^{\delta(\eta-\alpha)-\tau_\iii}\le C\gamma^n,
\end{align}
for all $n\in\N$.
\end{lemma}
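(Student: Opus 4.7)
The plan is to split the lemma into the qualitative convergence statement and the two exponential estimates \eqref{eq:gammaexists}--\eqref{eq:gammaexists2}, which share a common algebraic core.

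For the uniform convergences, I would first combine (M5), (M6) and $\sup_{x\in E} r_i(x) < 1$ (valid by continuity of $r_i$ on the compact $E$) to deduce that $\diam(E_{\iii|_n}) \to 0$ uniformly in $\iii \in \Sigma$; since $x_{\iii|_n}, x_\iii \in E_{\iii|_n}$, this gives $d(x_{\iii|_n}, x_\iii) \to 0$ uniformly. Coupled with uniform continuity of $r_i$ on $E$ and with continuity of the maps $x \mapsto q_\alpha(x)$ and $x \mapsto \tau_{q_\alpha(x)}(\mu,x)$ --- which follows from the implicit function theorem applied to \eqref{tauformula} and the first-order condition $\alpha = \partial_q \tau_q(\mu,x)$, whose nondegeneracy is ensured by $\alpha_{\min}(x) < \alpha < \alpha_{\max}(x)$ --- one obtains all three uniform convergences.

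The key pointwise identity driving \eqref{eq:gammaexists}--\eqref{eq:gammaexists2} is that at $\delta = 0$ the single-generation sum equals $1$ by \eqref{tauformula}, while its first $\delta$-derivative is strictly negative uniformly in $x \in E$. Set $\nu_i(x) = p_i(x)^{q(x)} r_i(x)^{-\tau_{q(x)}(\mu,x)}$; these are nonnegative weights summing to $1$, and differentiating \eqref{tauformula} in $q$ and using $\tau'_{q(x)}(\mu,x) = \alpha$ yields $\sum_i \nu_i(x)[\log p_i(x) - \alpha\log r_i(x)] = 0$. A direct differentiation then produces
\[
  \left.\tfrac{d}{d\delta}\right|_{\delta=0}\sum_i p_i(x)^{q(x)-\delta} r_i(x)^{\delta(\alpha+\eta) - \tau_{q(x)}(\mu,x)} = \eta\sum_i \nu_i(x)\log r_i(x) < 0,
\]
with the analogous identity for \eqref{eq:gammaexists2} ($q(x)+\delta$, $\delta(\eta-\alpha)$) reducing to the same negative quantity. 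By compactness of $E$, these derivatives are bounded away from $0$, so there exist $\delta_0 > 0$ and $\gamma_1 \in (0,1)$ such that for all $0 < \delta \le \delta_0$ and $x \in E$ both single-generation sums are at most $\gamma_1$.

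The last step is to factorise $\sum_{\iii\in\Sigma_n}$ by fixing a large prefix length $k$ and splitting $\iii = \jjj\kkk$ with $\jjj\in\Sigma_k$ and $\kkk\in\Sigma_{n-k}$. Using $\mu_\iii = \mu_\jjj\prod_{l=k+1}^n p_{\iii|_{l-1}}^{i_l}$ and $r_\iii = \prod_{l=1}^n r_{i_l}(x_\iii)$, together with the uniform approximations established in the first step, I would replace $p_{\iii|_{l-1}}^{i_l}$, $r_{i_l}(x_\iii)$, $q_\iii$ and $\tau_\iii$ by $p_{i_l}(x_\jjj)$, $r_{i_l}(x_\jjj)$, $q_\jjj$ and $\tau_\jjj$, so that the inner sum over $\kkk$ factorises as $\bigl[\sum_i p_i(x_\jjj)^{q_\jjj-\delta} r_i(x_\jjj)^{\delta(\alpha+\eta)-\tau_\jjj}\bigr]^{n-k} \le \gamma_1^{n-k}$. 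Since $\Sigma_k$ is finite, summing over $\jjj$ contributes only a $k$-dependent constant, yielding the bound $C\gamma^n$; \eqref{eq:gammaexists2} is handled identically.

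The main obstacle is the bookkeeping of the multiplicative errors in the replacement step: each ratio $p_{\iii|_{l-1}}^{i_l}/p_{i_l}(x_\jjj)$ and $r_{i_l}(x_\iii)/r_{i_l}(x_\jjj)$ differs from $1$ by at most $\eps$, but over $n-k$ generations these compound into $e^{O(\eps n)}$, which must not overwhelm $\gamma_1^{n-k}$; similarly the small differences $q_\iii - q_\jjj$ and $\tau_\iii - \tau_\jjj$ act as exponents on the exponentially small quantities $\mu_\jjj$ and $r_\jjj$. The resolution exploits the \emph{uniformity} of the convergences in the first step: $\eps$ can be made arbitrarily small, independently of $n$, by enlarging $k$. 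One fixes any $\gamma\in(\gamma_1,1)$ and then chooses $k = k(\eps,\gamma)$ so that $e^{O(\eps)}\gamma_1 \le \gamma$; the residual factors $\mu_\jjj^{q_\iii-q_\jjj}$ and $r_\jjj^{\tau_\jjj - \tau_\iii}$ are controlled because $|q_\iii - q_\jjj|$ and $|\tau_\iii - \tau_\jjj|$ tend to $0$ while $|\log\mu_\jjj|$ and $|\log r_\jjj|$ are bounded by $k$-dependent constants once $k$ is fixed.
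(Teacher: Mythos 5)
Your proposal is correct and follows essentially the same route as the paper: uniform convergence of $x_{\iii|_n}\to x_\iii$ plus nondegeneracy of the Legendre minimum (the paper phrases this as a uniform lower bound on $\frac{d^2}{dq^2}(-\tau_q)$ rather than invoking the implicit function theorem, but it is the same mechanism), then the negativity of the $\delta$-derivative of the single-generation sum at $\delta=0$, and finally factorisation over a long prefix with the compounding errors absorbed by the uniformity. The only cosmetic difference is in that last absorption step: the paper dominates each per-symbol factor by the corresponding factor with margin $2\delta$ in place of $\delta$, whereas you track an explicit $e^{O(\eps)}$ per-generation error and choose $k$ so that $e^{O(\eps)}\gamma_1\le\gamma<1$; both work.
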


\begin{proof}
From our definition of the Moran construction, it follows that $r_{\iii|_n}\rightarrow r_\iii$ uniformly as $n\rightarrow\infty$.
We first recall that if $x=x_\iii$, then $\tau(q)=\tau_q(\mu,x)$ is given by the formula \eqref{tauformula} for all $q\in\R$ by Theorem \ref{thm:localdimappl} and Lemma \ref{taulemmaq<0}.
From this, the assumption $\sup_{x\in E}\alpha_{\min}(x)<\alpha<\inf_{x\in E}\alpha_{\max}(x)$ and the fact that all $p_i(x)$, $r_i(x)$ are bounded away from $0$ and $1$ by the compactness of $E$, it follows that there is a compact interval $I\subset\R$ such that $q_\iii\in I$ for all $\iii\in\Sigma$ (and thus also for all $\iii\in\Sigma_*$). Moreover,
using implicit differentation, it follows that there is $c>0$ such that for all $x\in E$, $q\in I$,
\begin{equation}\label{eq:d2}
\frac{d^2}{dq^2}\left(-\tau_q(\mu,x)\right)\ge c.
\end{equation}

Since $r_i(y)\rightarrow r_i(x)$ and $p_i(y)\rightarrow p_i(x)$ uniformly as $y\rightarrow x$, it follows that for all $q\in I$
\begin{equation}\label{eq:tau_unif}
\tau_q(\mu,x_{\iii|_n})\rightarrow \tau_q(\mu,x_\iii)
\end{equation}
for all $\iii\in\Sigma$, uniformly as $n\rightarrow\infty$. From this and the definition of $\tau_\iii$, it follows immediately that
\begin{equation}\label{eq:difference_converges}
  (\alpha q_{\iii|_n}-\tau_{\iii|_n}) \to (\alpha q_{\iii}-\tau_{\iii}),
\end{equation}
uniformly as $n\rightarrow\infty$.
Given $\varepsilon>0$, \eqref{eq:tau_unif} implies that there is $n_0\in\N$ independent of $\iii$, such that $|\tau_{\iii|_n}-\tau_{q_{\iii|_n}}(\mu,x_\iii)|<\varepsilon$ if $n\ge n_0$. Using \eqref{eq:d2}, we also have
\[\alpha q_{\iii|_n}-\tau_{q_{\iii|_n}}(\mu,x_{\iii})\ge\alpha q_{\iii}-\tau_{\iii}+\tfrac{c}{4}|q_{\iii|_n}-q_\iii|^2.\]
From these two estimates, we infer
\begin{align*}
\alpha q_{\iii|_n}-\tau_{\iii|_n}\ge \alpha q_{\iii}-\tau_{\iii}+\tfrac{c}{4}|q_{\iii|_n}-q_\iii|^2-\varepsilon,
\end{align*}
and combining with \eqref{eq:difference_converges}, we see that $q_{\iii|_n}\rightarrow q_\iii$ and $\tau_{\iii|_n}\rightarrow\tau_\iii$ uniformly for all $\iii\in\Sigma$.

To prove the estimate \eqref{eq:gammaexists}, fix $\eta>0$. We first observe by implicit differentation (see \cite[Lemma 11.3]{Falconer1997}), that there is $\delta>0$ and $\gamma<1$ such that for all $\iii$, we have
\begin{equation}
\sum_{i=1}^m (p_{\iii}^i)^{q_\iii-2\delta} (r_{\iii}^i)^{2\delta(\alpha+\eta)-\tau_\iii}\le\gamma.
\end{equation}
By the first part of the lemma, we may choose $n_0\in\N$, such that if $\iii=\iii_0\jjj\in\Sigma_*$, where $\iii_0\in\Sigma_{n_0}$, then $(p_{\iii}^ {i})^{q_\iii-\delta}(r_{\iii}^ {i})^{\delta(\alpha+\eta)-\tau_\iii}\le (p_{\iii_0}^{i})^{q_{\iii_0}-2\delta}(r_{\iii_0}^ {i})^{2\delta(\alpha+\eta)-\tau_{\iii_0}}$. This leads to
\begin{equation*}
\sum_{\iii=\iii_0 \jjj\in\Sigma_n}\mu_{\iii}^{q_\iii-\delta} r_{\iii}^{\delta(\alpha+\eta)-\tau_\iii}
\le C_{n_0}\left(\sum_{i=1}^m(p_{\iii_0}^{i})^{q_{\iii_0}-2\delta} (r_{\iii_0}^{i})^{2\delta(\alpha+\eta)-\tau_{\iii_0}}\right)^{n-n_0}.
\end{equation*}
Since there are only finitely many words $\iii_0\in\Sigma_{n_0}$, this yields \eqref{eq:gammaexists}.
The estimate \eqref{eq:gammaexists2} is proved in a similar manner.
\end{proof}

\begin{lemma}\label{lemma:spectrum}
In the setting of Theorem \ref{thm:localmultifractal}, let
$\underline{f}(\alpha)=\inf_{x\in E} f(\alpha,x)$ and $\overline{f}(\alpha)=\sup_{x\in E} f(\alpha,x)$.
Then
\begin{align}
\underline{f}(\alpha)\le\dimh(E_{\alpha})\le\dimp(E_{\alpha})\le
\overline{f}(\alpha),
\end{align}
for all $\sup_{x\in E}\alpha_{\min}(x)<\alpha<\inf_{x\in E}\alpha_{\max}(x)$.
\end{lemma}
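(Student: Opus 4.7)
The middle inequality $\dimh(E_\alpha)\le\dimp(E_\alpha)$ is standard, so the proof will focus on the two outer bounds; both will be driven by Lemma \ref{prop:tekninen}, whose first part provides uniform convergence $q_{\iii|_n}\to q_\iii$, $\tau_{\iii|_n}\to\tau_\iii$ along $\Sigma$ and whose estimates \eqref{eq:gammaexists}--\eqref{eq:gammaexists2} give geometric decay of the twisted level sums that we will need both for a covering/counting argument and for a Borel--Cantelli argument.

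\textbf{Upper bound $\dimp(E_\alpha)\le\overline f(\alpha)$.} The plan is to fix small $\eps,\eta,\delta>0$ and, using compactness of $E$ together with the first part of Lemma \ref{prop:tekninen}, to cover $E$ by finitely many Moran cylinders $E_{\iii_0}$ of some common large depth on which $|q_\iii-q_{\iii_0}|+|\tau_\iii-\tau_{\iii_0}|<\eps$ for every $\iii\supset\iii_0$. For each such $\iii_0$ I would set
\[
A_n(\iii_0)=\bigl\{\iii\supset\iii_0:|\iii|=n,\ r_\iii^{\alpha+\eta}\le\mu_\iii\le r_\iii^{\alpha-\eta}\bigr\}
\]
and note, via (M6), (M8) and Proposition \ref{thm:localdim}, that every $y=x_\iii\in E_\alpha\cap E_{\iii_0}$ satisfies $\iii|_n\in A_n(\iii_0)$ for all large $n$, so $\{E_\iii:\iii\in A_n(\iii_0)\}$ covers $E_\alpha\cap E_{\iii_0}$ at every such scale. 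On $A_n(\iii_0)$, splitting on the sign of $q_\iii-\delta$ and using the defining bounds on $\mu_\iii$ yields $\mu_\iii^{q_\iii-\delta}r_\iii^{\delta(\alpha+\eta)-\tau_\iii}\ge r_\iii^{q_\iii\alpha-\tau_\iii-O(\delta+\eta)}$, so \eqref{eq:gammaexists} gives
\[
\sum_{\iii\in A_n(\iii_0)}r_\iii^{q_\iii\alpha-\tau_\iii-O(\delta+\eta)}\le C\gamma^n.
\]
Inside $E_{\iii_0}$ the exponent equals $f(\alpha,x_{\iii_0})+O(\eps)$. Summing the geometric series over $n\ge n_1$ will bound the upper box dimension of the subset of $E_\alpha\cap E_{\iii_0}$ whose address stays in $A_n(\iii_0)$ for every $n\ge n_1$ by $f(\alpha,x_{\iii_0})+O(\eps+\eta+\delta)$. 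Writing $E_\alpha$ as the countable union of these ``good'' pieces over $n_1\in\N$ and over the finite cover, and invoking countable stability of $\dimp$, I would conclude $\dimp(E_\alpha)\le\overline f(\alpha)+O(\eps+\eta+\delta)$; letting the error parameters vanish finishes the upper bound.

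\textbf{Lower bound $\underline f(\alpha)\le\dimh(E_\alpha)$.} Here my plan is to fix an arbitrary $x_0\in E$, set $q_0=q(x_0)$, $\tau_0=\tau_{q_0}(\mu,x_0)$ and $\pi_i=p_i(x_0)^{q_0}r_i(x_0)^{-\tau_0}$ (a probability vector by \eqref{tauformula} at $x_0$), and choose a long cylinder $E_{\iii_0}\ni x_0$ on which $p_\iii^i,r_\iii^i,q_\iii,\tau_\iii$ are all $\eps$-close to their values at $x_0$. On $E\cap E_{\iii_0}$ I will build a Gibbs-type probability measure $\nu$ by prescribing the transitions
\[
\frac{\nu(E_{\iii i})}{\nu(E_\iii)}=\frac{(p_\iii^i)^{q_0}(r_\iii^i)^{-\tau_0}}{\sum_j(p_\iii^j)^{q_0}(r_\iii^j)^{-\tau_0}}\qquad(\iii\supset\iii_0);
\]
the denominator tends to $1$ uniformly by continuity and \eqref{tauformula}, so $\nu(E_\iii)\asymp\mu_\iii^{q_0}r_\iii^{-\tau_0}$. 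A Borel--Cantelli argument using \emph{both} \eqref{eq:gammaexists} and \eqref{eq:gammaexists2} (for the two ways $\iii$ can fail the $A_n(\iii_0)$-condition) will then show that $\nu$-a.e.\ $y=x_\iii$ satisfies $\dimloc(\mu,y)=\alpha$; the crucial input is the identity $\tau'(q_0)=\alpha$, obtained by implicit differentiation of \eqref{tauformula} at $q_0$ combined with the minimizing property of $q_0$. A parallel SLLN-type computation, together with (M6) and (M8), yields for $\nu$-a.e.\ $y$
\[
\dimloc(\nu,y)=\frac{\sum_i\pi_i\log\pi_i}{\sum_i\pi_i\log r_i(x_0)}=q_0\alpha-\tau_0=f(\alpha,x_0),
\]
and the mass distribution principle then gives $\dimh(E_\alpha)\ge\dimh(E_\alpha\cap E_{\iii_0})\ge f(\alpha,x_0)$. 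Since $x_0\in E$ was arbitrary, this yields $\dimh(E_\alpha)\ge\sup_{x_0\in E}f(\alpha,x_0)\ge\underline f(\alpha)$.

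\textbf{Main obstacle.} The hardest part will be the construction and analysis of $\nu$: because $p_i,r_i,q,\tau$ all genuinely vary with the base point, one has to ``freeze'' the multifractal exponents at $x_0$, let the base probabilities $p_\iii^i$ drift inside $E_{\iii_0}$, and control the accumulated error uniformly. Lemma \ref{prop:tekninen} is precisely tailored for this task, and it is also the reason the whole lower-bound argument has to be localized inside a fixed deep Moran cylinder.
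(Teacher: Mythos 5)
The middle inequality is indeed standard, and your upper bound is a workable alternative to the paper's: the paper gets $\dimp(E_\alpha)\le\overline f(\alpha)$ by computing $\dimloc(\nu,x)=f(\alpha,x)$ for an auxiliary measure $\nu$ with $\nu(X\setminus E_\alpha)=0$, whereas you count cylinders directly; your observation that on $A_n(\iii_0)$ the summand of \eqref{eq:gammaexists} dominates $r_\iii^{\alpha q_\iii-\tau_\iii+O(\eta+\delta)}$ is correct. (One step you should not wave at: $A_n(\iii_0)$ is indexed by word length, while box/packing dimension needs coverings at a fixed metric scale, and generation-$n$ cylinders have diameters spread over an exponentially wide range; converting the bound $C\gamma^n$ into a cardinality bound for the antichain of cylinders of diameter comparable to a given $\delta$ requires summing over the $O(\log(1/\delta))$ relevant generations, which works but is not automatic.)

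The genuine gap is in your lower bound. You freeze the exponents $q_0=q(x_0)$, $\tau_0=\tau_{q_0}(\mu,x_0)$ at one point and run the Gibbs construction with weights $(p_\iii^i)^{q_0}(r_\iii^i)^{-\tau_0}/Z_\iii$ throughout a cylinder $E_{\iii_0}$. Inside that cylinder the data $p_i(y),r_i(y)$, and hence the correct tuning parameters $q(y)$ and $\tau_{q(y)}(\mu,y)$, still vary by up to $\eps$, so the $\nu$-typical value of $\log\mu(E_{\iii|_n})/\log\diam(E_{\iii|_n})$ is a ratio of Birkhoff-type averages that you can only pin down to $[\alpha-C\eps,\alpha+C\eps]$; equivalently, the mismatch between $(q_0,\tau_0)$ and the node exponents $(q_\iii,\tau_\iii)$ appearing in \eqref{eq:gammaexists}--\eqref{eq:gammaexists2} contributes a factor $e^{C\eps n}$ to the Borel--Cantelli sums, which forces the slack $\eta$ to stay of order $\eps$ instead of being sent to $0$ for a fixed measure. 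Since the measure $\nu=\nu_\eps$ changes with $\eps$, you cannot pass to the limit, and $\nu_\eps(E_\alpha)$ may be $0$ for every $\eps>0$ --- the mass distribution principle then says nothing about $E_\alpha$ itself, only about the fattened level sets. The paper's construction avoids exactly this: it sets $\nu(E_{\iii i})=(p_\iii^i)^{q_\iii}r_{\iii i}^{-\tau_\iii}\nu(E_\iii)$ with the exponents varying from node to node, so the measure is correctly tuned at every scale, \eqref{eq:gammaexists}--\eqref{eq:gammaexists2} apply verbatim, $\eta\downarrow0$ can be taken for the single fixed $\nu$ to give $\nu(X\setminus E_\alpha)=0$, and then $\dimloc(\nu,x)=q(x)\alpha-\tau_{q(x)}(\mu,x)=f(\alpha,x)$ on $E_\alpha$ yields both outer inequalities at once (no localisation to a deep cylinder and no normalising factor are needed). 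The uniform convergences $q_{\iii|_n}\to q_\iii$, $\tau_{\iii|_n}\to\tau_\iii$ of Lemma \ref{prop:tekninen} exist precisely to control this varying-exponent measure --- not to justify freezing the exponents.
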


\begin{proof}
Given $\sup_{x\in E}\alpha_{\min}(x)<\alpha<\inf_{x\in E}\alpha_{\max}(x)$, we define $q_\iii$ and $\tau_\iii$ using this $\alpha$ and define a probability measure $\nu$
on $X$ with $\spt(\nu)=E$ by setting
\begin{equation*}
\nu(E_{\iii i})=(p_{\iii}^i)^{q_\iii} r_{\iii i}^{-\tau_\iii}\nu(E_\iii)
\end{equation*}
for $\iii\in\Sigma_*$ and $i\in\{1,\ldots,m\}$.
Recall that $\sum_{i=1}^m (p_{\iii}^i)^{q_\iii} r_{\iii i}^{-\tau_\iii}=1$ by Theorem \ref{thm:localdimappl} and Lemma \ref{taulemmaq<0}.

The condition (M8) implies that
\begin{align}
\ldimloc(\nu,x_\iii)&=\liminf_{n\rightarrow\infty}\log\nu(E_{\iii|_n})/\log\diam(E_{\iii|_n}),\label{ojan2}\\
  \udimloc(\nu,x_\iii)&=\limsup_{n\rightarrow\infty}\log\nu(E_{\iii|_n})/\log\diam(E_{\iii|_n}) \label{ojan}
\end{align}
for all $\iii\in \Sigma$, and similar formulas apply for $\mu$. Also, using (M6) and Lemma \ref{prop:tekninen}, we have
\begin{align}\label{eq:cornudeff}
\frac{\log\nu(E_\iii)}{\log\left(\mu(E_\iii)^{q_\iii}\diam(E_\iii)^{-\tau_{\iii}}\right)}\longrightarrow 1,
\end{align}
uniformly as $|\iii|\rightarrow\infty$.
Let $\eta,\delta>0$. 
Then there is $n_0\in\N$ so that if $n\ge n_0$, we have
\begin{align*}
\nu\bigl( \{ x_\iii \in E :\; &\mu(E_{\iii|_n}) < \diam(E_{\iii|_n})^{\alpha + 3\eta} \} \bigr) \\
 & = \nu\bigl( \{ x_\iii \in E : \mu(E_{\iii|_n})^{-\delta} \diam(E_{\iii|_n})^{\delta(\alpha + 3\eta)} \ge 1 \} \bigr) \\
&\le \sum_{\iii\in\Sigma_n}\mu(E_\iii)^{-\delta}\diam(E_\iii)^{\delta(\alpha+3\eta)}\nu(E_\iii)
\le\sum_{\iii\in\Sigma_n}\mu_{\iii}^{q_\iii-\delta} r_{\iii}^{\delta(\alpha+\eta)-\tau_\iii}.
\end{align*}
In the last estimate we
used (M6) to conclude that
$\diam(E_\iii)^{\delta(\alpha+3\eta)}<r_{\iii}^{\delta(\alpha+2\eta)}$
for all $\iii\in\Sigma_n$ and (M6) and \eqref{eq:cornudeff} to guarantee that $\nu(E_\iii)\le\mu_{\iii}^{q_{\iii}}r_{\iii}^{-\tau_\iii-\delta\eta}$. Combining this with \eqref{eq:gammaexists} and choosing $\delta>0$ small enough, we conclude that for $n\ge n_0$,
\[\nu\bigl( \{ x_\iii \in E :\; \mu(E_{\iii|_n}) < \diam(E_{\iii|_n})^{\alpha + 3\eta} \} \bigr)\le C\gamma^n,\]
where $\gamma<1$ is independent of $n$.

Summing the above estimate over all $n\ge n_0$, using the Borel-Cantelli lemma, the analogs of \eqref{ojan2} and \eqref{ojan} for $\mu$ and
letting $\eta\downarrow0$, this
implies that
$\udimloc(\mu,x)\le\alpha$
for $\nu$-almost
all $x\in X$.
A similar calculation (in particular using \eqref{eq:gammaexists2} in place of \eqref{eq:gammaexists}) gives
$\ldimloc(\mu,x)\ge\alpha$
for $\nu$-almost all $x$. Thus, in particular, we have
\begin{equation}\label{tays}
\nu(X\setminus E_{\alpha})=0.
\end{equation}

From \eqref{eq:cornudeff}, Lemma \ref{prop:tekninen} and (M6), it follows that
\[\liminf_{n\rightarrow\infty}\frac{\log\nu(E_{\iii|_n})}{\log\diam(E_{\iii|_n})}=q(x_\iii)\liminf_{n\to\infty}\frac{\log\mu(E_{\iii|_n})}{\log\diam(E_{\iii|_n})}-\tau_{q(x_\iii)}(\mu,x_\iii)\]
for all $\iii\in\Sigma$ and similarly for $\limsup$.
Taking \eqref{ojan2}--\eqref{ojan} into account yields
$\dimloc(\nu,x)=q(x)\alpha-\tau_{q(x)}(\mu,x)=f(\alpha,x)$
for all $x\in E_\alpha$.
Together with \eqref{tays},
these estimates readily imply that
$\underline{f}(\alpha)\le\dimh(E_{\alpha})\le\dimp(E_{\alpha})\le
\overline{f}(\alpha)$.
\end{proof}

\begin{proof}[Proof of Theorem \ref{thm:localmultifractal}]
Let $x=x_\iii\in E$ and $\alpha_{\min}(x)\le\alpha\le\alpha_{\max}(x)$. We first consider the case $\alpha\neq\{\alpha_{\min}(x),\alpha_{\max}(x)\}$.

For each small $r>0$, let $n(r)$ and $N(r)$ be the largest and smallest natural numbers such that $B(x,r)\subset E_{\iii|_{n(r)}}$ and $E_{\iii|_{N(r)}}\subset B(x,r)$, respectively. By (M7) these are well defined for all small $r>0$ and moreover, $n(r),N(r)\longrightarrow\infty$
 as $r\downarrow 0$. Let $\alpha_0(r)=\inf\{f(\alpha,y)\,:\,y\in E_{\iii|_{N(r)}}\}$, $\alpha_1(r)=\sup\{f(\alpha,y)\,:\,y\in E_{\iii|_{n(r)}}\}$. Recall that by Lemma \ref{prop:tekninen}, if
$r>0$ is small enough then  $\sup\{\alpha_{\min}(y)\,:\,y\in E_{\iii|_{n(r)}}\}<\alpha<\inf\{\alpha_{\max}(y)\,:\,y\in E_{\iii|_{N(r)}}\}$ thanks to the assumption $\alpha_{\min}(x)<\alpha<\alpha_{\max}(x)$. In particular, $f(\alpha,y)$ is well defined for all $y\in E_{\iii|_{n(r)}}$.

Lemma \ref{lemma:spectrum} applied to $E_{\iii|_{n(r)}}$ and $E_{\iii|_{N(r)}}$ yields the estimates
\begin{align*}
 \alpha_0(r)& \le\dimh(\{y\in B(x,r)\,:\,\dimloc(\mu,y)=\alpha\})\\
            & \le\dimp(\{y\in B(x,r)\,:\,\dimloc(\mu,y)=\alpha\})\le\alpha_1(r).
\end{align*}
From Lemma \ref{prop:tekninen} we infer that $\alpha_0(r), \alpha_1(r) \longrightarrow f(\alpha,x)$ as $r\downarrow0$ and this gives the claim.

Finally, let us assume that $\alpha=\alpha_{\min}(x)$ (the case $\alpha=\alpha_{\max}(x)$ is symmetric). In the degenerate case $\alpha_{\min}(x)=\alpha_{\max}(x)$, we have $f(\alpha,x)=\dimloc(\mu,x)=\alpha$ and the claim follows using \eqref{eq:min-max}.
If $\alpha_{\min}(x)<\alpha_{\max}(x)$, then $f(\alpha_{\min}(x),x)=0$. Given $\eps>0$, we may consider the set
\[E_{\eps,r}=\{y\in B(x,r)\,:\,\udimloc(\mu,x)\le\alpha_{\min}(x)+\eps\}.\]
Now a minor variation of (the proof of) Lemma \ref{lemma:spectrum} implies that there is a constant $c<\infty$ (independent of $r$ and $\eps$) such that for all small $r>0$, we have $\dimp(E_{\eps,r})\le c\eps$. Letting $\eps\downarrow 0$ finishes the proof. 
\end{proof}

\begin{remark}
  Our aim in this paper was to present a simple situation where local multifractal analysis could be carried out in general metric spaces. For this reason, we assumed that the measure $\mu$ locally resembles a Bernoulli measure. Concerning possible generalisations, it is natural to ask if Theorem \ref{thm:localdimappl} (or a version of it) remains true when the measure $\mu$ is required to locally resemble a quasi-Bernoulli measure. For example, see \cite{BrownMichonPeyriere1992, Feng2007} for such results in the global setting.
\end{remark}

\subsection*{Acknowledgments}
We are grateful to the referees for carefully reading the manuscript and for their comments that lead to several improvements.


\begin{thebibliography}{10}

\bibitem{Barraletal2010}
J.~Barral, N.~Fournier, S.~Jaffard, and S.~Seuret.
\newblock A pure jump {M}arkov process with a random singularity spectrum.
\newblock {\em Ann. Probab.}, 38(5):1924--1946, 2010.

\bibitem{Barraletal2012}
J.~Barral, A.~Durand, S.~Jaffard, and S.~Seuret.
\newblock Local multifractal analysis.
{\em Applications of Fractals and Dynamical Systems in Science and Economics}, Contempory Mathematics,
(D.~Carfi, M.~L.~Lapidus, E.~J.~Pearse, and M.~van~Frankenhuijsen, editors), to appear.

\bibitem{BatakisTestud2011}
A.~Batakis and B.~Testud.
\newblock Multifractal analysis of inhomogeneous {B}ernoulli products.
\newblock {\em J. Stat. Phys.}, 142(5):1105--1120, 2011.

\bibitem{BrownMichonPeyriere1992}
G.~Brown, G.~Michon, and J.~Peyri\`ere.
\newblock On the multifractal analysis of measures.
\newblock {\em J. Statist. Phys.}, 142(3-4): 775--790, 1992.

\bibitem{CawleyMauldin1992}
R.~Cawley and R.~D. Mauldin.
\newblock Multifractal decompositions of {M}oran fractals.
\newblock {\em Adv. Math.}, 92(2):196--236, 1992.

\bibitem{Cutler1990}
C.~Cutler.
\newblock {C}onnecting ergodicity and dimension in dynamical systems.
\newblock {\em Ergodic Theory Dynam. Systems.}, 10(3):451--462, 1990.

\bibitem{Falconer1997}
K.~J. Falconer.
\newblock {\em Techniques in Fractal Geometry}.
\newblock John Wiley {\&} Sons Ltd., England, 1997.

\bibitem{Feng2007}
D.-J. Feng.
\newblock Gibbs properties of self-conformal measures and the multifractal formalism.
\newblock {\em Ergodic Theory Dynam. Systems.}, 27(3):787--812, 2007.

\bibitem{FengLau2009}
D.-J. Feng and K.-S. Lau.
\newblock Multifractal formalism for self-similar measures with weak separation
  condition.
\newblock {\em J. Math. Pures Appl. (9)}, 92(4):407--428, 2009.

\bibitem{FrischParisi1985}
U.~Frisch and G.~Parisi.
\newblock On the singularity structure of fully developed turbulence.
\newblock In {\em Proc. Internat. School Phys. Enrico Fermi}, pages 84--88.
  1985.

\bibitem{HentschelProcaccia1983}
H.~G.~E. Hentschel and I.~Procaccia.
\newblock The infinite number of generalized dimensions of fractals and strange
  attractors.
\newblock {\em Phys. D}, 8(3):435--444, 1983.

\bibitem{KaenmakiRajalaSuomala2012}
A.~K{\"a}enm{\"a}ki, T.~Rajala, and V.~Suomala.
\newblock Local homogeneity and dimensions of measures.
\newblock preprint, 2012.

\bibitem{KaenmakiRajalaSuomala2012a}
A.~K{\"a}enm{\"a}ki, T.~Rajala, and V.~Suomala.
\newblock Existence of doubling measures via generalised nested cubes.
\newblock {\em Proc. Amer. Math. Soc.}, 140:3275--3281, 2012.

\bibitem{Olsen1995}
L.~Olsen.
\newblock A multifractal formalism.
\newblock {\em Adv. Math.}, 116:82--196, 1995.

\bibitem{Pesin1997}
Y.~B. Pesin.
\newblock {\em Dimension theory in dynamical systems}.
\newblock Chicago Lectures in Mathematics. University of Chicago Press,
  Chicago, IL, 1997.
\newblock Contemporary views and applications.

\bibitem{RajalaVilppolainen2009}
T.~Rajala and M.~Vilppolainen.
\newblock Weakly controlled moran constructions and iterated function systems
  in metric spaces.
\newblock {\em Illinois J. Math.}, 2010.
\newblock to appear.

\bibitem{Riedi1995}
R.~Riedi.
\newblock An improved multifractal formalism and self-similar measures.
\newblock {\em J. Math. Anal. Appl.}, 189(2):462--490, 1995.

\end{thebibliography}
\end{document}